\theoremstyle{plain}
\newcommand{\printname}[1] {}
\newtheorem{theorem}{Theorem}[section]
\newtheorem{proposition}[theorem]{Proposition}
\newtheorem{lemma}[theorem]{Lemma}
\newtheorem{corollary}[theorem]{Corollary}
\newtheorem{definition}[theorem]{Definition}
\newtheorem{example}[theorem]{Example}
\newtheorem{remark}[theorem]{Remark}
\numberwithin{equation}{section}
\newcommand{\uG}[1]{\G^{(#1)}}
\newcommand{\Lie}{\mathcal{L}}          
\renewcommand{\gg}{\mathfrak{g}}        
\newcommand{\hh}{\mathfrak{h}}          
\newcommand{\X}{\mathcal{X}}            
\newcommand{\Y}{\mathcal{Y}}            
\newcommand{\G}{\mathcal{G}}
\newcommand{\toto}{\rightrightarrows}
\newcommand{\ad}{\mathrm{ad}}
\newcommand{\sour}        {\mathsf{s}}
\newcommand{\tar}         {{\mathsf{t}}}
\newcommand{\Arrow}{\rightarrow}        
\begin{document}
\title{Lie groupoids and the Fr\"olicher-Nijenhuis bracket}

\author{Henrique Bursztyn}
\address{IMPA, Estrada Dona Castorina 110, Rio de Janeiro, 22460-320, Brazil}
\email{henrique@impa.br}

\author{Thiago Drummond}
\address{Instituto de Matem\'atica, UFRJ, Av. Athos da Silveira Ramos 149, CT - Bloco C,
Rio de Janeiro, 21941-901, Brazil  }
\email{drummond@im.ufrj.br}


\thanks{\mbox{~~~~}
MSC2010 Subject Classification Number: 58HXX.
\newline
\mbox{~~~~}Keywords: Lie groupoids, Fr\"olicher-Nijenhuis bracket,
multiplicative vector-valued forms}


\begin{abstract}
The space of vector-valued forms on any manifold is a graded Lie
algebra with respect to the Fr\"olicher-Nijenhuis bracket. In this
paper we consider multiplicative vector-valued forms on Lie
groupoids and show that they naturally form a graded Lie subalgebra.
Along the way, we discuss various examples and different
characterizations of multiplicative vector-valued forms.
\end{abstract}
\maketitle

\tableofcontents

\setcounter{tocdepth}{1}


\section{Introduction}             %

Lie groupoids are ubiquitous in several areas of mathematics; they
arise as models for singular spaces, in the study of foliations and
group actions, noncommutative geometry, Poisson geometry, etc. (see
e.g. \cite{CW,CDW,Mac-book,Mo,MM} and references therein). In these
settings, one is often led to consider Lie groupoids endowed with
additional geometric structures compatible with the groupoid
operation, referred to as {\em multiplicative}. Examples of interest
include multiplicative symplectic and Poisson structures
\cite{LW,Mac-Xu,We87} (see also \cite{AC,bc,BC,bcwz,ILX}), complex
structures \cite{LSX}, and distributions \cite{CSS,Haw,JO}. The
present paper fits into the broader project of studying
multiplicative structures on Lie groupoids and should be seen as a
companion to \cite{BDK}. Here we focus on multiplicative
vector-valued forms and study their compatibility with the
Fr\"olicher-Nijenhuis bracket \cite{FN}.

There are several algebraic objects naturally associated with a
smooth manifold $M$, such as the de Rham complex
$(\Omega^\bullet(M),d)$, the Gerstenhaber algebra of multivector
fields $(\Gamma(\wedge^\bullet
TM),[\cdot,\cdot]_{\scriptscriptstyle{SN}})$, where
$[\cdot,\cdot]_{\scriptscriptstyle{SN}}$ denotes the
Schouten-Nijenhuis bracket (see e.g. \cite[Sec.~7.5]{Mac-book}), and
the graded Lie algebra of vector-valued forms
$$
(\Gamma(\wedge^\bullet T^*M\otimes
TM),[\cdot,\cdot]_{\scriptscriptstyle{FN}}),
$$
where $[\cdot,\cdot]_{\scriptscriptstyle{FN}}$ is the
Fr\"olicher-Nijenhuis bracket \cite{FN} (see e.g.
\cite[Sec.~8]{nat}). These objects play a key role in measuring the
integrability of geometric structures on $M$: for example, a
differential form on $M$ is closed if it is a cocycle in the de Rham
complex, a bivector field $\Lambda \in \Gamma(\wedge^2 TM)$ is a
Poisson structure if it satisfies
$[\Lambda,\Lambda]_{\scriptscriptstyle{SN}}=0$, and an almost
complex structure $J\in \Gamma(T^*M\otimes TM)$ is a complex
structure if $[J,J]_{\scriptscriptstyle{FN}}=0$.

When $M$ is replaced by a Lie groupoid $\G$, the relevant issue is
whether these natural algebraic operations are compatible with {\em
multiplicative} geometric structures. It is a simple verification
that multiplicative forms define a subcomplex of
$(\Omega^\bullet(\G),d)$; it is also known that the
Schouten-Nijenhuis bracket restricts to multiplicative multivector
fields, making them into a Gerstenhaber subalgebra of
$(\Gamma(\wedge^\bullet T\G),
[\cdot,\cdot]_{\scriptscriptstyle{SN}})$ \cite[Sec.~2.1]{ILX}. We
verify in this paper that an analogous result holds for
multiplicative vector-valued forms on $\G$, i.e., we show that the
space of multiplicative vector-valued forms is closed under the
Fr\"olicher-Nijenhuis bracket. Some of the applications of this
result will be discussed in \cite{BDK}.

The paper is organized as follows. In Section \ref{sec:LG} we review
the key examples of Lie groupoids that are relevant to the paper. In
Section \ref{sec:mult} we consider multiplicative vector-valued
forms on Lie groupoids and discuss examples, including relations
with connections and curvature on principal bundles. Section
\ref{Sec:FN} contains the main results: we give a direct proof of
the compatibility of multiplicative vector-valued forms and the
Fr\"olicher-Nijenhuis bracket in Thm.~\ref{thm:compatible}, and then
see how this result follows from a broader, more conceptual,
perspective, in which multiplicative vector-valued forms are
characterized in terms of the Bott-Shulman-Stasheff complex of a Lie
groupoid.

\smallskip

{\em It is a pleasure to dedicate this paper to IMPA's 60th
anniversary.}

\medskip

\noindent {\bf Acknowledgments:} We are grateful to J. Palis for his
encouragement in the preparation of these notes. We thank A. Cabrera
and N. Kieserman for useful discussions, and D. Carchedi for helpful
advice (particularly on Remark \ref{rem:TN}). H. B. was partially
supported by FAPERJ.

\section{Lie groupoids and examples} \label{sec:LG}


This section recalls some examples of Lie groupoids relevant to the
paper; further details can be found e.g. in \cite{CW,Mac-book,MM}.

Let $\G$ be a Lie groupoid over a manifold $M$, denoted by $\G\toto
M$. As usual, we refer to $\G$ as the {\em space of arrows} and $M$
as the {\em space of objects}. We denote the source and target maps
by $\sour$, $\tar: \G\to M$, the multiplication map by
$$
m: \G^{(2)}:=\{(g,h)\in \G\times \G,|\, \sour(g)=\tar(h)\} \to \G,
$$
the unit map by $\epsilon: M\to \G$, and inversion by $\iota:\G\to
\G$, $\iota(g)=g^{-1}$. We often identify $M$ with its image under
the embedding $\epsilon$ and use the notation $\epsilon(x)= 1_x$. We
also write $m(g,h)=gh$ to simplify notation. If there is any risk of
confusion, we use the groupoid itself to label its structure maps:
$\sour_\G$, $\tar_\G$, $m_\G$, $\epsilon_\G$, $\iota_\G$.

A {\it morphism} from $\G\toto M$ to $\mathcal{H} \toto N$ is a pair
of smooth maps $F:\G\to \mathcal{H}$, $f: M\to N$ that commute with
source and target maps, and preserve multiplication (this implies
that unit and inversion maps are also preserved).

A central observation to this paper is that, given a Lie groupoid
$\G\toto M$, its tangent bundle $T\G$ is naturally a Lie groupoid
over $TM$: its source and target maps are given by $T\sour_\G$,
$T\tar_\G: T\G\to TM$; for the multiplication, we notice that
$$
(T\G)^{(2)}=\{(X,Y)\in T\G\times T\G\;|\; T\sour_\G(X)=T\tar_\G(Y)\}
= T(\G^{(2)}),
$$
so we set $m_{T\G}=Tm_\G$. Similarly, the unit and inverse maps are
$T\epsilon_\G : TM \Arrow T\G$ and $T\iota_\G : T\G \Arrow T\G$.

Another important remark is that the Whitney sum $\oplus^k T\G$ (of
vector budles over $\G$) is naturally a Lie groupoid over $\oplus^k
TM$,
\begin{equation}\label{eq:sum}
\oplus^k T\G \toto \oplus^k TM,
\end{equation}
with structure maps defined componentwise.

We list some  basic examples of Lie groupoids and their tangent
bundles.


\begin{example}\label{ex:group}
A Lie groupoid over a point is a Lie group $G$, in which case its
tangent bundle $TG$ is also a Lie group. For $g, h \in G$ and $X \in
T_g G, Y \in T_h G$, the multiplication on $TG$ is given by
$$
Tm_G(X, Y) = Tr_h(X) + Tl_g(Y) \in T_{gh}G,
$$
where $r_g, l_h: G\to G$ denote right, left translations. Using the
trivialization $TG \simeq G \times \gg$ by right-translations, one
sees that
\begin{equation}\label{semi_direct}
Tm_G((g,u), (h,v)) = (gh, u + \mathrm{Ad}_g(v)).
\end{equation}
This identifies $TG$ with the Lie group $G\ltimes \gg$ obtained by
semi-direct product with respect to the adjoint action.
\end{example}

\begin{example}\label{ex:VB}
Any vector bundle $\pi: E \Arrow M$ can be naturally seen as a Lie
groupoid: source and target maps coincide with the projection $\pi$,
and the multiplication is given by addition on the fibers.
In this case, the tangent groupoid $TE$ over $TM$ is defined by the
vector bundle $T\pi: TE \Arrow TM$, known as the {\em tangent
prolongation} of $E$.
\end{example}

\begin{example}\label{ex:gauge}
Let $G$ be a Lie group, and let $\pi:P \Arrow M$ be a (right)
principal $G$-bundle. We denote the $G$-action on $P$ by $\psi:
P\times G\to P$,
$$
p \mapsto \psi_g(p),\;\;\; p\in P.
$$
The corresponding {\em gauge groupoid} $\mathcal{G}(P) \toto M$ is
defined as the orbit space of the diagonal action of $G$ on $P\times
P$; we write $\overline{(p,q)}$ for the image of
$(p,q)\in P\times P$ in $\G(P)$.
Source and target maps on $\G(P)$ are given by the composition of
the natural projections $P\times P\to P$ with $\pi$, and
multiplication is given by
$$
\overline{(p,q)} \cdot \overline{(p',q')}=\overline{(p,q')},
$$
where we assume in this composition that $q=p'$ (given any
representatives $(p,q)$ and $(p',q')$, we have that
$\pi(q)=\pi(p')$, so for a fixed $(p,q)$ one may always replace
$(p',q')$ by a unique point in its $G$-orbit satisfying the desired
property). The unit map is
$$
\epsilon: M \to \G(P), \;\; x\mapsto \overline{(p,p)},
$$
where $p\in P$ is any point such that $\pi(p)=x$, whereas the
inversion is given by
$$
\overline{(p,q)}\mapsto \overline{(q,p)}.
$$

The $G$-action on $P$ naturally induces a $TG$-action on $TP$ by
\begin{equation}\label{eq:Psi}
\Psi_{(g,u)}(X_q)= T\psi_g(X_q) + u_P(\psi_g(q)),
\end{equation}
for $X_q\in T_qP$ and $(g,u)\in TG\cong G\ltimes \gg$; here $u_P \in
\mathfrak{X}(P)$ is the infinitesimal generator of the $G$-action on
$P$. This action makes $T\pi: TP\to TM$ into a principal
$TG$-bundle, so we have a corresponding gauge groupoid $\G(TP)$. One
may verify that there is a natural identification between $\G(TP)$
and the tangent groupoid $T\mathcal{G}(P) \rightrightarrows TM$:
\begin{equation}\label{eq:TP}
T\G(P) = \G(TP).
\end{equation}
We denote the image of an element $(X,Y)\in TP\times TP$ in $\G(TP)$
by $\overline{(X,Y)}$. The induced vector bundle structure
$\G(TP)\to \G(P)$ is given by
$$
\overline{(X_1,Y_1)} + \overline{(X_2,Y_2)} =
\overline{(X_1+X_2,Y_1+Y_2)},\;\;
\lambda\overline{(X,Y)}=\overline{(\lambda X, \lambda Y)},
$$
where, for the addition, the representatives are chosen over the
same fiber of $TP\times TP \to P\times P$.
\end{example}

For a vector bundle $E\to M$ (of rank $n$), let $\mathrm{GL}(E)$ be
the gauge groupoid of the frame $\mathrm{GL}(n)$-bundle
$\mathrm{Fr}(E)\to M$. More concretely, $\mathrm{GL}(E)\toto M$ is
the Lie groupoid whose arrows between $x, y \in M$ are linear
isomorphisms from $E_x$ to $E_y$.
A {\em representation} of $\G\toto M$ on a vector bundle $E\to M$ is
a groupoid homomorphism from $\G$ into $\mathrm{GL}(E)$.

\begin{example}\label{ex:semi}
Given a representation of a Lie groupoid $\G\toto M$ on a vector
bundle $\pi: E\to M$, there is an associated {\em semi-direct
product} Lie groupoid $\G \ltimes E\toto M$: its space of arrows is
$$
\tar^* E = \G \times_{\tar,\pi} E = \{ (g,e)\,|\, \tar(g)=\pi(e)\},
$$
with source and target maps given by $(g,e)\mapsto \sour_\G(g)$ and
$(g,e)\mapsto \tar_\G(g)$, respectively, and multiplication given by
\begin{equation}\label{eq:semid}
((g_1,e_1),(g_2,e_2))\mapsto (g_1g_2, e_1 + g_1\cdot e_2),
\end{equation}
where we write $g\cdot e$ for the action $\G \times_{\sour,\pi} E
\to E$ induced by the representation.

There is an induced representation of $T\G\toto TM$ on $TE\to TM$,
and the tangent groupoid to $\G \ltimes E$ is the corresponding semi-direct product.

\end{example}

\section{Multiplicative vector-valued forms} \label{sec:mult}

\subsection{Definition and first examples}

A {\em vector-valued $k$-form} on a manifold $N$ is an element in
$\Omega^k(N,TN) := \Gamma(\wedge^kT^*N\otimes TN)$. It will be
convenient to think of vector-valued $k$-forms as maps
$$
\oplus^k TN \to TN.
$$
In particular, vector-valued 1-forms $K\in \Omega^1(N,TN)$ are
naturally identified with endomorphisms $TN\to TN$ (covering the
identity).

Given a Lie groupoid $\G\toto M$, we will be concerned with
vector-valued forms on $\G$ which are compatible with the groupoid
structure in the following sense \cite{BDK,LSX}.

\begin{definition}
A vector-valued form $K \in \Omega^k(\G, T\G)$ is {\em
multiplicative} if there exists $K_M \in \Omega^k(M, TM)$ such that
\begin{equation}\label{mult_diagram}
\xymatrix{
\oplus^k T\G \ar@<-3pt>[d] \ar@<3pt>[d] \ar[r]^{\hspace{10pt}K} & T\G \ar@<-3pt>[d] \ar@<3pt>[d]\\
\oplus^k TM \ar[r]^{\hspace{10pt}K_M} & TM\\
}
\end{equation}
is a groupoid morphism.
\end{definition}
In this case, we say that $K$ {\it covers} $K_M$.

\begin{example}
Let $G$ be a Lie group. An endomorphism $J:TG\to TG$, viewed as a
vector-valued 1-form $J\in \Omega^1(G,TG)$, is multiplicative if and
only if
$$
J\circ Tm = Tm\circ (J\times J).
$$
In particular, if $J$ is an integrable almost complex structure on
$J$, then it is multiplicative if and only if $m: G\times G \to G$
is a holomorphic map, i.e., $J$ makes $G$ into a complex Lie group
(the fact that the inversion map is holomorphic automatically
follows).

In general, a multiplicative vector-valued $k$-form on a Lie group
$G$ may be equivalently viewed as a multiplicative $k$-form on $G$
with values on the adjoint representation \footnote{Given a Lie
groupoid $\G\toto M$ along with a representation on $E\to M$, recall
from \cite[Sec.~2.1]{CSS} that a form $\omega \in
\Omega^k(\G,\tar^*E)$ is {\em multiplicative} if it satisfies
$m^*\omega |_{(g,h)} = pr_1^*\omega + g\cdot pr_2^*\omega$, where
$(g,h)\in \G^{(2)}$ and $pr_1, pr_2:\G^{(2)}\to \G$ are the natural
projections.}: to verify this fact, we use the identification $TG =
G\times \gg = \tar^*\gg$, recalling that the target map is the
trivial map $\tar: G \Arrow \{\ast\}$, and notice that, for $K \in
\Omega^k(G, TG)$, \eqref{semi_direct} implies that
\eqref{mult_diagram} is a Lie groupoid morphism if and only if
\begin{equation}\label{eq:adj}
(m^*K)_{(g,h)} = pr_1^*K + \mathrm{Ad}_g(pr_2^*K), \,\, \, \text{
for } g, \, h \in G,
\end{equation}
where $pr_1$, $pr_2: G\times G\to G$ are the natural projections.
\end{example}

A Lie groupoid $\G\toto M$ is called {\em holomorphic} if it is
equipped with a complex structure $J\in \Omega^1(\G,T\G)$ that is
multiplicative. Besides complex Lie groups, holomorphic vector
bundles provide natural examples:

\begin{example}
Let $(M,J_M)$ be a complex manifold and consider a (real) vector
bundle $\pi: E \Arrow M$, viewed as a Lie groupoid as in
Example~\ref{ex:VB}. A vector-valued $k$-form $K\in \Omega^k(E,TE)$
is multiplicative in this case if and only if the associated map
$\oplus^kTE\to TE$ is a vector-bundle morphism with respect to the
vector-bundle structures $\oplus^kTE\to \oplus^kTM$ and $TE\to TM$.
It is observed in \cite{LSX} that an integrable almost complex
structure $J \in \Omega^1(E,TE)$ which is multiplicative and covers
$J_M \in \Omega^1(M, TM)$ is equivalent to equipping $E$ with the
structure of a holomorphic vector bundle over $M$.
\end{example}


Other examples of multiplicative vector-valued forms arise in the
context of connections on principal bundles, as we now discuss.

\subsection{Principal connections and curvature}

Let $G$ be a Lie group and $\pi: P\to M$ be a principal (right)
$G$-bundle. We will follow the notation of Example~\ref{ex:gauge}.

Let $V\subseteq TP$ be the vertical bundle over $P$, i.e., the fiber
of $V\to P$ over $p\in P$ is
$$
V_p = \{u_P(p)\,|\, u\in \gg\},
$$
where $u_P \in \mathfrak{X}(P)$ is the infinitesimal generator of
the $G$-action on $P$. The vertical bundle $V\to P$ induces a
distribution\footnote{We always assume distributions to be of
constant rank, i.e., subbundles of the tangent bundle.}
$$
\Delta_V \subseteq T\G(P)
$$
on the gauge groupoid $\G(P)$ given by the image of $V\times
V\subset TP\times TP$ under the quotient map $TP\times TP\to \G(TP)
= T\G(P)$.

\subsubsection{Principal connections} \

Let $\theta\in \Omega^1(P,\gg)$  be a principal connection on $P$.
By using the identification (of $G$-equivariant
vector bundles over $P$)
$$
P\times \gg \to V, \;\;\; (p,u)\mapsto u_P(p),
$$
we may equivalently describe it as a $G$-equivariant 1-form
$\Theta\in \Omega^1(P,V)$ such that
\begin{equation}\label{eq:Theta}
\mathrm{Im}(\Theta)=V,\;\;  \mbox{ and }\; \Theta^2= \Theta,
\end{equation}
so that $\Theta(X)= (\theta(X))_P$. We denote the horizontal bundle
defined by the connection by $H:=\ker(\theta) =\ker(\Theta)
\subseteq TP$.


We observe that principal connections on $P$ are naturally
associated with certain multiplicative vector-valued 1-forms on
$\G(P)$:

\begin{proposition}\label{prop:correspondence}
There is a one-to-one correspondence between principal connections
$\theta \in \Omega^1(P,\gg)$ on $P$ and multiplicative $K\in
\Omega^1(\G(P),T\G(P))$ satisfying $K^2=K$ and
$\mathrm{Im}(K)=\Delta_V$.
\end{proposition}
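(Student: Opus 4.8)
The plan is to make the correspondence explicit on both sides. On the connection side I use the description, recalled around \eqref{eq:Theta}, of a principal connection $\theta$ as a $G$-equivariant $\Theta \in \Omega^1(P,V)$ with $\Theta^2=\Theta$ and $\mathrm{Im}(\Theta)=V$ (these force $\Theta|_V=\mathrm{id}_V$). On the groupoid side I use the identification $T\G(P)=\G(TP)$ of \eqref{eq:TP} together with its ``pair-groupoid-like'' features: composable tangent vectors can be written with a common middle component and then $\overline{(X,Y)}\cdot\overline{(Y,Z)}=\overline{(X,Z)}$; the units are the $\overline{(X,X)}$; inversion is $\overline{(X,Y)}\mapsto\overline{(Y,X)}$. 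I will also use repeatedly that $V=\ker(T\pi)$, so that $\Delta_V\subseteq\ker(T\sour)\cap\ker(T\tar)$ (in fact an equality, by a rank count).

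\emph{From $\theta$ to $K$.} Given $\Theta$, a short computation with \eqref{eq:Psi}, using $G$-equivariance of $\Theta$ and $\Theta|_V=\mathrm{id}_V$, gives $\Theta\circ\Psi_{(g,u)}=\Psi_{(g,u)}\circ\Theta$ for all $(g,u)\in TG$. Hence $(X,Y)\mapsto(\Theta X,\Theta Y)$ is equivariant for the diagonal $TG$-action on $TP\times TP$ and descends to $K\colon\G(TP)\to\G(TP)$, $\overline{(X,Y)}\mapsto\overline{(\Theta X,\Theta Y)}$. Fiberwise linearity of $\Theta$ makes $K$ a vector-valued $1$-form on $\G(P)$ covering the identity; $\Theta^2=\Theta$ gives $K^2=K$; and $\mathrm{Im}(\Theta)=V$ gives $\mathrm{Im}(K)=\Delta_V$. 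Since $\mathrm{Im}(K)=\Delta_V\subseteq\ker(T\sour)\cap\ker(T\tar)$, the form $K$ covers $K_M=0$ and commutes with $\sour$ and $\tar$, while preservation of multiplication is immediate from $\overline{(X,Y)}\cdot\overline{(Y,Z)}=\overline{(X,Z)}$. Thus $K$ is multiplicative with the required properties.

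\emph{From $K$ to $\theta$.} Fix $p_0\in P$ over $x_0\in M$ and set $j_{p_0}\colon P\to\sour^{-1}(x_0)\subseteq\G(P)$, $p\mapsto\overline{(p,p_0)}$; this is a diffeomorphism onto the source fiber, with $Tj_{p_0}(X)=\overline{(X,0_{p_0})}$ and image $\ker(T\sour)|_{\sour^{-1}(x_0)}$. Since $\tar\circ j_{p_0}=\pi$, the map $Tj_{p_0}^{-1}$ sends $(\ker T\sour\cap\ker T\tar)|_{\sour^{-1}(x_0)}$ onto $V$; so from $\mathrm{Im}(K)=\Delta_V$ the endomorphism $\Theta^{(p_0)}:=(Tj_{p_0})^{-1}\circ K\circ Tj_{p_0}$ of $TP$ has image in $V$, and $K^2=K$ forces $\Theta^{(p_0)}|_V=\mathrm{id}_V$, hence $\mathrm{Im}(\Theta^{(p_0)})=V$ and $(\Theta^{(p_0)})^2=\Theta^{(p_0)}$. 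The key point is independence of $p_0$: for another $p_1\in P$ and $a:=\overline{(p_1,p_0)}\in\G(P)$ one has $j_{p_0}=R_a\circ j_{p_1}$, and on $\ker(T\sour)$ right translation by $a$ is realized inside $T\G(P)$ as $\xi\mapsto\xi\cdot_{T\G(P)}0_a$; since $K$ is linear (so $K(0_a)=0_a$), multiplicative, and has $\mathrm{Im}(K)\subseteq\ker(T\sour)$, it follows that $K\circ TR_a=TR_a\circ K$ there, whence $\Theta^{(p_0)}=\Theta^{(p_1)}=:\Theta$. Taking $p_1=\psi_{g^{-1}}(p_0)$ and using $j_{p_0}\circ\psi_g=j_{\psi_{g^{-1}}(p_0)}$ then yields $T\psi_g\circ\Theta=\Theta\circ T\psi_g$. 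So $\Theta$ satisfies \eqref{eq:Theta} and comes from a unique connection $\theta$ via $\Theta(X)=(\theta(X))_P$.

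\emph{Mutual inverses, and the hard part.} Starting from $\theta$, evaluating the associated $K$ on $Tj_{p_0}(X)=\overline{(X,0_{p_0})}$ returns $\overline{(\Theta X,0_{p_0})}$, recovering $\Theta$. Conversely, given $K$ as in the statement, write a general element as $\overline{(X,Y)}=Tj_q(X)\cdot\iota(Tj_q(Y))$; since a morphism of Lie groupoids preserves products and inversion, and since $K$ restricted to source fibers is governed by $\Theta$ as above, one gets $K(\overline{(X,Y)})=\overline{(\Theta X,\Theta Y)}$, i.e.\ $K$ is exactly the $1$-form built from its associated connection. I expect the main obstacle to be the bookkeeping inside the tangent groupoid $\G(TP)=T\G(P)$: identifying $\Delta_V=\ker(T\sour)\cap\ker(T\tar)$, and, above all, translating the multiplicativity of $K$ into the intertwining of $K$ with right translations by zero vectors over arrows, which is what simultaneously delivers the independence of $p_0$ and the $G$-equivariance of $\Theta$.
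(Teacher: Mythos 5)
Your proposal is correct, and it sets up the same explicit correspondence as the paper ($K(\overline{(X,Y)})=\overline{(\Theta X,\Theta Y)}$ one way; recovery of $\Theta$ from the restriction of $K$ to $\ker(T\sour)$, followed by a product decomposition of a general $\overline{(X_p,Y_q)}$ to show this restriction determines $K$), but the verifications differ from the paper's in both directions. For the multiplicativity of the $K$ induced by $\Theta$, the paper routes the argument through Lemma~\ref{lem:distr} (that $\Delta_V$ and $\Delta_H$ are Lie subgroupoids of $T\G(P)$) combined with Lemma~\ref{lem:distrib1} (a projection is multiplicative iff its image and kernel are multiplicative distributions); you instead verify directly that $\Theta\circ\Psi_{(g,u)}=\Psi_{(g,u)}\circ\Theta$ via \eqref{eq:Psi}, so that $K$ descends to $\G(TP)$ and visibly respects the multiplication $\overline{(X,Y)}\cdot\overline{(Y,Z)}=\overline{(X,Z)}$. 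Your check is more elementary and self-contained; the paper's lemmas are general statements that get reused. In the converse direction the paper passes to the $G$-quotient from the outset, identifying $TP/G\cong\ker(T\sour_{\G(P)})|_M$, so that the $G$-equivariance of $\Theta$ is built in; you work with the basepoint-dependent charts $j_{p_0}$ and therefore owe an extra argument for independence of $p_0$ and for equivariance, which you supply correctly by writing right translation on $\ker(T\sour)$ as $\xi\mapsto\xi\cdot 0_a$ and using $K(0_a)=0_a$ together with multiplicativity. This is a genuine (and instructive) variant: it isolates precisely which consequence of multiplicativity — commutation with right translations by zero vectors over arrows — yields the equivariance of $\Theta$. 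If you write this up, the only points to make fully explicit are the identity $\Delta_V=\ker(T\sour)\cap\ker(T\tar)$, which needs no rank count since $T\sour(\overline{(X,Y)})=T\pi(Y)$ and $T\tar(\overline{(X,Y)})=T\pi(X)$, and the composability checks in $T\G(P)$ each time you multiply $K$-images, which follow from $\mathrm{Im}(K)\subseteq\ker(T\sour)\cap\ker(T\tar)$.
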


For the proof, we need some general observations.

\begin{lemma}\label{lem:distr}
Let $D \subset TP$ be a $G$-invariant distribution on $P$. Let $\hh
\subseteq \gg$ be an $\mathrm{Ad}$-invariant subspace, and suppose
that
$$
D\cap V |_p = \{u_P(p)\;|\; u\in \hh\},
$$
at each $p\in P$. Then the image $\Delta_D$ of $D\times D \subset
TP\times TP$ under the quotient map $TP\times TP \to T\G(P)$ is
distribution which is a Lie subgroupoid of $T\G(P)$:
$$
(\Delta_D\toto D_M) \hookrightarrow (T\G(P)\toto TM),
$$
for $D_M=T\pi(D)\subset TM$.
\end{lemma}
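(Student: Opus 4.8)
The plan is to verify directly that $\Delta_D$ is a smooth subbundle of $T\G(P)$ of constant rank and that it is closed under the groupoid structure maps of $T\G(P) = \G(TP)$, restricting to those of $T\G(P)$. Throughout I will use the identification $T\G(P) = \G(TP)$ from \eqref{eq:TP}, so that an element of $T\G(P)$ is of the form $\overline{(X,Y)}$ with $X, Y \in TP$ over points with the same $\pi$-image, and $\Delta_D$ consists of those $\overline{(X,Y)}$ with $X, Y$ both lying in $D$ (over the same base point in $M$, which is automatic).

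First I would establish that $\Delta_D$ is a distribution, i.e.\ a constant-rank subbundle of $T\G(P)$. Since $D \subset TP$ is $G$-invariant, the subset $D \times D \subset TP \times TP$ is invariant under the diagonal $TG$-action $\Psi$ of \eqref{eq:Psi} (here one uses that $u_P$ is tangent to $D$ precisely when $u \in \hh$, but for invariance of $D\times D$ under the diagonal $T\psi_g$ one only needs $G$-invariance of $D$; the $\hh$-condition is what makes the quotient have constant rank — see below). Hence $\Delta_D$ is well-defined as the image of $D\times D$ under $TP\times TP \to \G(TP)$. To see it has constant rank, fix $\overline{(p,q)} \in \G(P)$ and note that the fiber of $\G(TP)$ over it is identified with $(T_pP \times T_qP)/\Delta(\gg)$, where $\Delta(\gg)$ is the image of the diagonal infinitesimal action; the image of $(D_p \times D_q)$ in this quotient has dimension $2\operatorname{rk}D - \dim(\hh)$, independent of the point, because $(D_p\times D_q)\cap \Delta(\gg) = \{(u_P(p), u_P(q)) : u \in \hh\}$ by the hypothesis $D\cap V|_p = \{u_P(p): u\in\hh\}$. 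So $\Delta_D \to \G(P)$ is a vector subbundle; smoothness follows since it is the image of a smooth vector bundle under a smooth submersion with constant-rank fibers.

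Next I would check compatibility with the structure maps. The source and target of $\overline{(X,Y)} \in \G(TP)$ are $T\pi(X), T\pi(Y) \in TM$, so $\sour, \tar$ map $\Delta_D$ into $T\pi(D) = D_M$; conversely these are submersions onto $D_M$, so $\Delta_D \toto D_M$ is a sub-vector-bundle-groupoid at the level of source/target. For multiplication: if $\overline{(X_1,Y_1)}, \overline{(X_2,Y_2)} \in \Delta_D$ are composable, we may choose representatives with $Y_1 = X_2$ (the matching condition in the gauge groupoid), and then the product is $\overline{(X_1, Y_2)}$, which lies in $\Delta_D$ since $X_1, Y_2 \in D$. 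Units: $\epsilon_{T\G(P)}$ sends $v \in T_xM$ to $\overline{(X,X)}$ where $X \in TP$ is any lift of $v$ tangent to a section, but in fact for $v \in D_M$ one can choose $X \in D$ (as $T\pi|_D : D \to D_M$ is surjective), so units of $D_M$ land in $\Delta_D$. Inversion sends $\overline{(X,Y)} \mapsto \overline{(Y,X)}$, clearly preserving $\Delta_D$. Finally I would remark that a subset closed under all structure maps, which is moreover an embedded submanifold (being a subbundle), is a Lie subgroupoid, giving the desired $(\Delta_D \toto D_M) \hookrightarrow (T\G(P) \toto TM)$.

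The main obstacle I anticipate is the constant-rank claim: one must carefully track how $D \times D \subset TP \times TP$ sits relative to the diagonal infinitesimal action when passing to the quotient $\G(TP)$, and the role of the $\operatorname{Ad}$-invariant subspace $\hh$ is exactly to guarantee that the intersection $(D\times D) \cap \Delta(\gg)$ has locally constant dimension — without the $\hh$-hypothesis the image in $\G(TP)$ need not be a subbundle. Handling the passage from "$D\cap V$ is controlled by $\hh$ at each point" to "$(D_p\times D_q)\cap \Delta(\gg)$ is controlled by $\hh$" requires noting that $(u_P(p), v_P(q)) \in D_p\times D_q$ with $(u_P(p),v_P(q))$ in the image of the diagonal action forces $u = v$, and then $u_P(p)\in D_p$, $u_P(q)\in D_q$ each force $u \in \hh$ by hypothesis; the $\operatorname{Ad}$-invariance of $\hh$ is then what makes this description compatible with the $G$-equivariant gluing over $\G(P)$. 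Everything else is a routine unwinding of the gauge-groupoid and tangent-groupoid descriptions from Example~\ref{ex:gauge}.
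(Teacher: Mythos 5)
Your overall strategy is the same as the paper's: work inside $\G(TP)=(TP\times TP)/TG$, use the hypothesis on $D\cap V$ to control how $D\times D$ sits over the quotient, and check closure under the structure maps directly. Your fiberwise constant-rank computation (identifying the fiber over $\overline{(p,q)}$ with $(T_pP\times T_qP)/\{(u_P(p),u_P(q)):u\in\gg\}$ and cutting out $\hh$ from the intersection) is correct and is a more explicit version of the paper's one-line remark that $V\cap D$ has constant rank. However, there are two problems, one of which is a genuine gap at the crucial step. First, your opening claim that $D\times D$ is invariant under the diagonal $TG$-action is false: by \eqref{eq:Psi}, $\Psi_{(g,u)}$ moves $D$ off itself whenever $u\notin\hh$. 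What is true, and what the paper establishes first, is that $D$ is invariant under the subgroup $G\ltimes\hh\subseteq TG$; this uses both the $G$-invariance of $D$ and the hypothesis $D\cap V|_p=\{u_P(p)\,:\,u\in\hh\}$, and the $\mathrm{Ad}$-invariance of $\hh$ is precisely what makes $G\ltimes\hh$ a subgroup of $TG$.

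Second, and more seriously, your multiplication step asserts that one may choose representatives with $Y_1=X_2$ and that the product $\overline{(X_1,Y_2)}$ then lies in $\Delta_D$ ``since $X_1,Y_2\in D$.'' But to arrange $Y_1=X_2$ you must replace $(X_2,Y_2)$ by $(\Psi_{(g,u)}(X_2),\Psi_{(g,u)}(Y_2))$ for the unique $(g,u)\in TG$ with $\Psi_{(g,u)}(X_2)=Y_1$, and you need the \emph{new} second component $\Psi_{(g,u)}(Y_2)$ to still lie in $D$. This is exactly where the hypothesis does its main work: since $T\psi_g(X_2)\in D$ by $G$-invariance, the identity $Y_1=T\psi_g(X_2)+u_P(\cdot)$ forces $u_P\in D\cap V$, hence $u\in\hh$, and then $\Psi_{(g,u)}(Y_2)\in D$ by the $G\ltimes\hh$-invariance of $D$. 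Your closing paragraph attributes the role of $\hh$ solely to the constant-rank claim and calls the rest ``routine unwinding''; that mislocates the hypothesis, since without the argument just sketched the closure of $\Delta_D$ under $Tm$ would fail. Once these two points are repaired, your proof coincides with the paper's.
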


\begin{proof}
One may directly check from \eqref{eq:Psi} that $D\subseteq TP$ is
$G\ltimes \hh$-invariant, where we view the semi-direct product Lie
group $G\ltimes \hh\subseteq TG$ as a subgroup of $TG$.

For $X$, $Y\in D$, $T\pi(X)=T\pi(Y)$ if and only if
$$
X=\Psi_{(g,u)}Y = T\psi_g(Y) + u_P,
$$
but since $D$ is $G$-invariant, it follows that $u_P\in D$, hence
$u\in \hh$. It follows that $T\pi(X)=T\pi(Y)$ if and only if $X$ and
$Y$ are on the same $G\ltimes \hh$-orbit.

Since $V\cap D$ has constant rank, $\Delta_D$ is a subbundle of
$T\G(P)$, and $D_M= T\pi(D)$ is a subbundle of $TM$. To verify that
$\Delta_D\toto D_M$ is Lie subgroupoid of $T\G(P)$, let
$$
\overline{(X_i,Y_i)} \in \Delta_D|_{\overline{(p_i,q_i)}},
$$
for $i=1,2$, be composable, i.e., $T\pi(Y_1)=T\pi(X_2)$. For $Y_1\in
D|_{q_1}$ and $X_2\in D|_{p_2}$, we saw that this implies the
existence of $(g,u) \in G\ltimes \hh$ such that $Y_1 =
\Psi_{(g,u)}(X_2)$, where $q_1=\psi_g(p_2)$. Hence
\begin{align*}
Tm(\overline{(X_1, Y_1)}, \overline{(X_2, Y_2)} ) &=
Tm(\overline{(X_1,Y_1)},\overline{(\Psi_{(g,u)}(X_2),\Psi_{(g,u)}(Y_2))} )\\
& =\overline{(X_1, \Psi_{(g,u)}(Y_2))},
\end{align*}
which belongs to $\Delta_D$ since $D$ is $G\times\hh$-invariant.
\end{proof}

The following are particular instances of Lemma~\ref{lem:distr}:

\begin{itemize}
\item The vertical bundle $V$ satisfies the conditions of
Lemma~\ref{lem:distr} with $\hh=\gg$. In this case $V_M=T\pi(V)=M$,
and we have a corresponding subgroupoid
$$
\Delta_V\toto M
$$
of $T\G(P)\toto TM$.

\item For $D=H$ the horizontal bundle of a principal connection, the
conditions in Lemma~\ref{lem:distr} hold for $\hh=\{0\}$; then
$H_M=T\pi(H)=TM$, and we have a subgroupoid
$$
\Delta_H\toto TM.
$$

\end{itemize}

For an arbitrary Lie groupoid $\G\toto M$, recall that a
distribution $\Delta \subset T\G$ is called {\em multiplicative} if
it is a Lie subgroupoid of $T\G \toto TM$. In this case, the space of objects
of $\Delta$ is a subbundle $\Delta_M\subseteq TM$ (see e.g. \cite{jotz12}).


\begin{lemma}\label{lem:distrib1}
Let $\Delta^1$, $\Delta^2$ be distributions on $\G$ satisfying $T\G
= \Delta^1 \oplus \Delta^2$. If $K\in \Omega^1(\G, T\G)$ is a
projection so that $\Delta^1= \mathrm{Im}(K)$ and $\Delta^2 =
\ker(K)$, then $K$ is multiplicative if and only if both $\Delta^1$
and $\Delta^2$ are multiplicative distributions.
\end{lemma}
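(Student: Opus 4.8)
The plan is to exploit the fact that a vector-valued $1$-form $K$ and its complementary pair $(\Delta^1,\Delta^2) = (\operatorname{Im}K,\ker K)$ carry exactly the same information, and to translate the morphism condition in \eqref{mult_diagram} into statements about the two graphs. Recall that $K$ is a projection with image $\Delta^1$ and kernel $\Delta^2$, so as a map $T\G\to T\G$ it is determined by the splitting $T\G=\Delta^1\oplus\Delta^2$, and conversely. The first step is to record the (straightforward) observation that $K\colon T\G\to T\G$ covers a map $K_M\colon TM\to TM$ that is necessarily also a projection, with $\operatorname{Im}(K_M)=\Delta^1_M$ and $\ker(K_M)=\Delta^2_M$ whenever $\Delta^1_M,\Delta^2_M$ are the object-spaces of subgroupoids; this uses that $T\sour,T\tar$ are fibrewise linear over $\sour,\tar$ and that $\sour,\tar$ restrict nicely to the subbundles.

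For the direction ``$K$ multiplicative $\Rightarrow$ $\Delta^1,\Delta^2$ multiplicative'': if \eqref{mult_diagram} is a groupoid morphism then, since $\Delta^1=\operatorname{Im}(K)$ and a groupoid morphism sends subgroupoids to subgroupoids, the image of $K$ is a subgroupoid of $T\G$ provided we know the source of $K$, namely all of $T\G$, is one — which it is. More carefully, one checks that $\Delta^1=\{X\in T\G : KX = X\}$ is closed under the groupoid operations on $T\G$ because $K$ commutes with $T\sour,T\tar$ and $Tm$ (multiplicativity); explicitly, if $X,Y\in\Delta^1$ are composable in $T\G$ then $Tm(X,Y)$ satisfies $K(Tm(X,Y)) = Tm(KX,KY) = Tm(X,Y)$, so $Tm(X,Y)\in\Delta^1$, and similarly the image of $T\epsilon$ and $T\iota$ land in $\Delta^1$. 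The identical argument with the identity $K - \mathrm{id}$ (equivalently, using $\Delta^2 = \{X : KX = 0\}$ and that $Tm$ is linear over composition) shows $\Delta^2$ is a subgroupoid; here one uses that $Tm$ restricted to a fibre product is a linear map in the appropriate sense, which follows from $Tm$ being the tangent of a smooth map and the groupoid structure on $T\G$ being that of Example-type constructions reviewed in Section~\ref{sec:LG}. Since both are already subbundles (they are distributions), they are Lie subgroupoids, hence multiplicative distributions.

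For the converse: suppose $\Delta^1,\Delta^2$ are both multiplicative, i.e.\ Lie subgroupoids of $T\G\toto TM$ with object bundles $\Delta^1_M,\Delta^2_M$. One first notes $TM = \Delta^1_M\oplus\Delta^2_M$: this follows by applying $T\pi$-type arguments, or more simply because $\Delta^1_M$ and $\Delta^2_M$ are the images of $\Delta^1,\Delta^2$ under the unit section composed with projection, and $T\G=\Delta^1\oplus\Delta^2$ restricts over the units. Then $K_M$ is defined as the projection onto $\Delta^1_M$ along $\Delta^2_M$. To see that the square \eqref{mult_diagram} is a groupoid morphism, take a composable pair $(X,Y)\in(\oplus^k T\G)^{(2)}$ — but note $k=1$ here, so this is just a pair of composable tangent vectors — and decompose $X = X_1 + X_2$, $Y = Y_1 + Y_2$ with $X_i,Y_i\in\Delta^i$. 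The compatibility of source/target with the decomposition forces $(X_i,Y_i)$ to be composable for each $i$ separately (since $\Delta^1\cap\Delta^2 = 0$ over $M$ and source/target are linear), so $Tm(X,Y) = Tm(X_1,Y_1) + Tm(X_2,Y_2)$ by bilinearity/linearity of $Tm$ over the fibre product, and each summand lies in $\Delta^i$ because $\Delta^i$ is a subgroupoid. Applying $K$ picks out the $\Delta^1$ part, i.e.\ $K(Tm(X,Y)) = Tm(X_1,Y_1) = Tm(KX,KY)$, which is precisely the multiplicativity of $K$; compatibility with units and source/target is checked the same way.

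The main obstacle I expect is the bookkeeping around ``$Tm$ is linear over composition'': $m$ is only defined on $\G^{(2)}$, not on $\G\times\G$, so the statement that $Tm$ distributes over the direct-sum decomposition $X = X_1 + X_2$ needs the observation that the fibre product $(T\G)^{(2)} = T(\G^{(2)})$ is itself a vector bundle over $\G^{(2)}$ and that $Tm = T(m)$ is a vector-bundle morphism over $m$; equivalently, it is the content of the fact (recalled in Section~\ref{sec:LG}) that $T\G\toto TM$ is a Lie groupoid in the category of vector bundles. Once that is cleanly set up, the decomposition argument goes through with no real computation, and the key point of the whole lemma is simply that a projection is interchangeable with its eigenbundle splitting. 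A secondary subtlety is verifying that $\Delta^i_M$ are complementary in $TM$ and that $K$ genuinely covers the projection $K_M$ built from them, rather than some other map — this is where one invokes that the object component of a subgroupoid of $T\G$ is determined, and uses $T\G|_M = TM$-type identifications.
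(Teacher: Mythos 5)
Your proposal is correct, and the converse direction (building $K_M$ as the projection onto $\Delta^1_M$ along $\Delta^2_M$, splitting a composable pair $(X,Y)$ into its $\Delta^1$- and $\Delta^2$-components, using $T\G\toto TM$ being a groupoid in vector bundles to get $Tm(X,Y)=Tm(X_1,Y_1)+Tm(X_2,Y_2)$, and letting $K$ pick out the first summand) is essentially identical to the paper's argument, including the observation that composability of $(X,Y)$ forces composability of each pair $(X_i,Y_i)$ separately via $TM=\Delta^1_M\oplus\Delta^2_M$. Where you diverge is the forward direction: the paper does not verify closure of $\ker(K)$ and $\mathrm{Im}(K)$ under the groupoid operations by hand, but instead invokes the general fact that the kernel of a morphism of VB-groupoids is a VB-subgroupoid whenever it has constant rank (citing \cite{BCH,Li-Bland}), applied to $K$ for $\Delta^2$ and to the multiplicative projection $\mathrm{Id}-K$ for $\Delta^1$. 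Your direct computation ($K(Tm(X,Y))=Tm(KX,KY)$ shows the fixed-point set and the kernel are closed under $Tm$, and similarly for units and inversion) is the elementary core of that cited result and is perfectly valid here; what the citation additionally packages, and what your write-up leaves implicit, is the smooth-structure bookkeeping --- that the object spaces $\Delta^i_M\subseteq TM$ are genuinely subbundles (this follows from $K_M$ being a projection, since $K$ covers $K_M$ and $T\epsilon$ is injective) and that the restricted source maps are surjective submersions, so that one really gets \emph{Lie} subgroupoids and not just set-theoretic ones. Since $\Delta^1,\Delta^2$ are subbundles by hypothesis, this is a fillable gap rather than a genuine one; your approach buys self-containedness at the cost of a little extra verification, while the paper's buys brevity by outsourcing the constant-rank argument.
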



\begin{proof}
Suppose that $K$ is multiplicative, i.e., a groupoid morphism
$T\G\to T\G$. The fact that $\Delta^2=\ker(K)$ is multiplicative
follows from the more general fact that the kernel of morphisms of
VB-groupoids (see e.g. \cite[Ch.~11]{Mac-book}) is a VB-subgroupoid
whenever it has constant rank, see e.g. \cite{BCH,Li-Bland}. The
analogous result for $\Delta^1$ follows since $\mathrm{Id}-K$ is
also a multiplicative projection and $\Delta^1$ is its kernel.

To prove the converse, note that the spaces of units of the grupoids
$\Delta^1$ and $\Delta^2$, denoted by $\Delta_M^1$ and $\Delta_M^2$,
are subbundles of $TM$ satisfying $TM=\Delta_M^1 \oplus \Delta_M^2$. Let $K_M: TM \Arrow TM$
be the projection on
$\Delta^1_M$ along $\Delta^2_M$. It is clear that $K$ and $K_M$
intertwine the source and target maps for $T\G \toto TM$. For
$$
X = X_1 + X_2 \in \Delta^1 \oplus \Delta^2 \,\,\,\,\text{ and }
\,\,\,\, Y = Y_1 + Y_2 \in \Delta^1 \oplus \Delta^2
$$
satisfying $T\sour(X) = T\tar(Y)$, we see that
$T\sour(X_1)=T\tar(Y_1)$, $T\sour(X_2)= T\tar(Y_2)$ and
$$
\begin{array}{rl}
K(Tm(X,Y))= K(Tm(X_1, Y_1) + Tm(X_2, Y_2)) =&  K(Tm(X_1, Y_1))\\
  = & Tm(X_1, Y_1)\\
  = & Tm(K(X), K(Y)).
\end{array}
$$
So $K$ preserves groupoid multiplication.
\end{proof}


We can now prove Prop.~\ref{prop:correspondence}.

\begin{proof}

Consider a connection on $P$ given by $\Theta\in \Omega^1(P,V)$. Let
$K: T\G(P)\to T\G(P)$ be defined by
\begin{equation}\label{eq:KT}
K(\overline{(X,Y)})=\overline{(\Theta(X),\Theta(Y))}.
\end{equation}
The properties of $\Theta$ (see \eqref{eq:Theta}) imply that $K$ is
well defined (by the $G$-equivariance of $\Theta$), satisfies $K^2=K$, and
that $\mathrm{Im}(K)=\Delta_V$
and $\mathrm{Ker}(K)=\Delta_H$. By Lemmas~\ref{lem:distr} and
\ref{lem:distrib1}, $K$ is multiplicative.

Conversely, let $K$ be a multiplicative vector-valued 1-form
satisfying $K^2=K$ and $\mathrm{Im}(K) = \Delta_V$. Let us consider
the vector bundle $TP/G \to M$, and its subbundle $V/G\to M$. We
note that $K$ naturally induces a projection map
\begin{equation}\label{eq:eqproj}
\bar{\Theta}: TP/G \to V/G
\end{equation}
as follows. First recall that there is a natural identification of
$TP/G$ with $\ker(T\sour_{\G(P)})|_M$ as vector bundles over $M$:
indeed, noticing that
$$
\ker(T\sour_{\G(P)})|_x =\{ \overline{(X_p,Y_p)}, \; T\pi(Y)=0\},
$$
where $\pi(p)=x\in M$, the identification $TP/G \to
\ker(T\sour_{\G(P)})|_M$ is given by
$$
\overline{X}|_{\pi(p)} \mapsto \overline{(X_p,0_p)},
$$
where $\overline{X}$ denotes the class of $X \in TP$ in $TP/G$. The
inverse map is $\overline{(X_p,Y_p)} \mapsto \overline{X_p-Y_p} \in
(TP/G)|_{\pi(p)}$. Under this identification, the subbundle $V/G
\subset TP/G$ corresponds to $\Delta_V |_M \subset
\ker(T\sour_{\G(P)})|_M$. The projection map \eqref{eq:eqproj} is
defined by the diagram
\begin{equation}\label{eq:diag}
\xymatrix{
 TP/G
 \ar[r]^-{\sim}\ar[d]_-{\bar{\Theta}} & \ker(T\sour_{\G(P)})|_M \ar[d]^-{K} \\
V/G \ar[r]_-{\sim} & \Delta_V|_M. }
\end{equation}
The map $\bar{\Theta}$ is equivalent to a connection $\Theta\in
\Omega^1(P,V)$ through
$\bar{\Theta}(\overline{X})=\overline{\Theta(X)}$. This is the
connection defined by $K$.

More explicitly, the relation between $\Theta$ and $K$ in diagram
\eqref{eq:diag} is
\begin{equation}\label{eq:Ktheta}
K(\overline{(X_p,0_p)})=\overline{(\Theta(X_p),0_p)},
\end{equation}
and, as we now see, this condition completely determines $K$: Using
the groupoid structure on $T\G(P)$, we can write an arbitrary
$\overline{(X_p,Y_q)}$ as
$$
\overline{(X_p,Y_q)}=\overline{(X_p,0_p)} \cdot
\overline{(0_p,0_q)}\cdot \overline{(0_q,Y_q)} =
\overline{(X_p,0_p)} \cdot \overline{(0_p,0_q)} \cdot
\overline{(Y_q,0_q)}^{-1},
$$
and, since $K$ is multiplicative, \eqref{eq:Ktheta} implies that
$$
K(\overline{(X_p,Y_q)})= \overline{(\Theta(X_p),0_p)}\cdot
\overline{(0_p,0_q)} \cdot \overline{(\Theta(Y_q),0_q)}^{-1}=
\overline{(\Theta(X_p),\Theta(Y_q))}.
$$

It follows (see \eqref{eq:KT}) that the construction relating $K$
and $\Theta$ just described are inverses of one another.
\end{proof}

\subsubsection{Curvature}\

For a manifold $N$, the \textit{curvature} of a projection $K:
TN\to TN$ is the vector-valued 2-form $R_K \in \Omega^2(N,TN)$ given
by
\begin{equation}\label{eq:pcurv}
R_K(X,Y)= K([(\mathrm{Id} - K)(X), (\mathrm{Id} - K)(X)]), \;\;
X,Y\in \mathfrak{X}(N),
\end{equation}
where $[\cdot,\cdot]$ is the Lie bracket of vector fields (see e.g. \cite{nat}).
So $R_K$ measures the integrability of the
distribution $\mathrm{Ker}(K)\subseteq TN$. The
\textit{co-curvature} of $K$ is the curvature of $\mathrm{Id}-K$.

A direct consequence of the results in Section~\ref{Sec:FN} (see
Theorem~\ref{thm:compatible}) is that, on a Lie groupoid, the
curvature of any multiplicative projection is a multiplicative
vector-valued 2-form. We will now verify this fact in the case of
projections on gauge groupoids $\G(P)$ arising from principal
connections, as in Prop.~\ref{prop:correspondence}. In this
particular context, the result follows from the explicit relation
between $R_K \in \Omega^2(\G(P),T\G(P))$ and the curvature of the
connection corresponding to $K$, as explained in
Prop.~\ref{prop:curv} below.

Let $\mathrm{Ad}(P)\to M$ be the vector bundle associated with the
adjoint action on $\gg$, i.e., $\mathrm{Ad}(P)=(P\times \gg)/G$. We
denote elements in $\mathrm{Ad}(P)$ by $\overline{(p,v)}$, for $p
\in P$ and $v \in \gg$. There is a natural representation of the
gauge groupoid $\G(P)$ on $\mathrm{Ad}(P)$ by
\begin{equation}\label{eq:adjact}
\overline{(q,p)} \cdot \overline{(p, v)} = \overline{(q, v)}.
\end{equation}
As in Example~\ref{ex:semi}, we consider the semi-direct product
groupoid $\G(P)\ltimes \mathrm{Ad}(P)$, that we denote by
$\tar^*\mathrm{Ad}(P)\toto M$.

\begin{lemma}\label{lem:isom} The following holds:
\begin{itemize}
\item[(a)] There is a natural groupoid isomorphism
\begin{equation}\label{eq:phi}
\varphi: (\Delta_V\toto M) \to (\tar^*\mathrm{Ad}(P)\toto M),
\end{equation}
which is also a isomorphism of vector bundles over $\G(P)$. (I.e.,
this is a isomorphism of VB-groupoids \cite[Ch.~11]{Mac-book}.)
\item[(b)] Assume that a vector-valued $k$-form $R \in \Omega^k(\G(P),T\G(P))$
takes values in $\Delta_V\subseteq T\G(P)$. Then $R$ is
multiplicative if and only if $R' := \varphi \circ R \in
\Omega^{k}(\G(P), t^*\mathrm{Ad}(P))$ satisfies
\begin{equation}\label{mult_rep}
(m^*R')_{(g,h)} = pr_1^* R' + g\cdot pr_2^*R',
\end{equation}
for $(g,h)\in \G(P)^{(2)}$. (I.e., $R'$ is multiplicative as a
$k$-form with values on the representation $\mathrm{Ad}(P)$, as in
\cite{CSS}.)
\end{itemize}
\end{lemma}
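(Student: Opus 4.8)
The plan is to build the isomorphism $\varphi$ in part (a) explicitly at the level of representatives and then deduce part (b) by unwinding both notions of multiplicativity through $\varphi$. First I would describe $\varphi$ on fibers. Recall that $\Delta_V$ is the image of $V\times V\subseteq TP\times TP$ under the quotient $TP\times TP\to T\G(P)=\G(TP)$; using the identification $P\times\gg\xrightarrow{\sim} V$, $(p,u)\mapsto u_P(p)$, a typical element of $\Delta_V$ over an arrow $\overline{(p,q)}\in\G(P)$ has the form $\overline{(u_P(p),w_P(q))}$ for $u,w\in\gg$. On the other hand, the fiber of $\tar^*\mathrm{Ad}(P)$ over $\overline{(p,q)}$ is $\mathrm{Ad}(P)_{\tar(\overline{(p,q)})}=\mathrm{Ad}(P)_{\pi(p)}$, with elements $\overline{(p,v)}$, $v\in\gg$. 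I would set
\begin{equation}\label{eq:phi_def}
\varphi\bigl(\overline{(u_P(p),w_P(q))}\bigr)=\bigl(\overline{(p,q)},\,\overline{(p,\,u-\mathrm{Ad}_{g}w)}\bigr),
\end{equation}
where $g\in G$ is the unique group element with $q=\psi_g(p)$ (equivalently, $\varphi$ sends $\overline{(X,Y)}\in\Delta_V$ to $\overline{(p,q)}$ together with the class in $\mathrm{Ad}(P)$ of $X_p-Y_q$ transported to the $p$-fiber, exactly mirroring the inverse identification $TP/G\to\ker(T\sour)|_M$ used in the proof of Prop.~\ref{prop:correspondence}). One checks this is independent of the chosen representative $(p,q)$ by $G$-equivariance of $u\mapsto u_P$ and the adjoint action, and it is fiberwise linear, hence a vector-bundle morphism over $\G(P)$ covering the identity; its inverse is given by the evident formula, so it is a vector-bundle isomorphism.

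Next I would check that $\varphi$ is a groupoid morphism. The source and target of $\overline{(u_P(p),w_P(q))}$ in $\Delta_V\toto M$ are $\pi(q)$ and $\pi(p)$ (since $V_M=T\pi(V)=M$), which match the source and target of $(\overline{(p,q)},\overline{(p,v)})$ in $\tar^*\mathrm{Ad}(P)\toto M$; so $\varphi$ intertwines source and target. For multiplicativity, take two composable elements $\overline{(u_P(p),w_P(q))}$ and $\overline{(u'_P(p'),w'_P(q'))}$ of $\Delta_V$. By the description of $T\G(P)=\G(TP)$ in Example~\ref{ex:gauge}, to compose we replace the second element by its unique representative over the fiber of the first, i.e.\ with $p'$ matched to $q$; after this replacement the product is $\overline{(u_P(q),\,w'_P(q'))}$-type with the middle term $w_P(q)=u'_P(q)$ cancelling, exactly as in the computation in the proof of Lemma~\ref{lem:distr}. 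Tracking the same data through \eqref{eq:phi_def} and comparing with the semi-direct product multiplication \eqref{eq:semid} for $\G(P)\ltimes\mathrm{Ad}(P)$ — using the adjoint representation \eqref{eq:adjact} — shows the two agree. This proves (a).

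For part (b): if $R\in\Omega^k(\G(P),T\G(P))$ takes values in $\Delta_V$, then by definition $R$ is multiplicative iff the diagram \eqref{mult_diagram} is a groupoid morphism, and since $R$ lands in the subgroupoid $\Delta_V$, this is equivalent to $R$ being a morphism into $\Delta_V\toto M$ (here one uses Lemma~\ref{lem:distr}, which identifies $\Delta_V\toto M$ as a Lie subgroupoid of $T\G(P)\toto TM$). Composing with the groupoid isomorphism $\varphi$ from (a), $R$ is multiplicative iff $R'=\varphi\circ R\colon\oplus^k T\G(P)\to\tar^*\mathrm{Ad}(P)$ is a groupoid morphism. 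Finally I would observe that for a $k$-form with values in a representation, being a groupoid morphism $\oplus^k T\G\to\G\ltimes E$ in the sense of \eqref{mult_diagram} is precisely the condition \eqref{mult_rep}: unwinding the semi-direct multiplication \eqref{eq:semid}, the requirement $R'(Tm(X,Y))=m_{\G\ltimes E}(R'(X),R'(Y))$ evaluated on composable strings of tangent vectors is exactly $(m^*R')_{(g,h)}=pr_1^*R'+g\cdot pr_2^*R'$. This is the same bookkeeping underlying \eqref{eq:adj} in the Lie group case, so it may simply be cited.

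The main obstacle is the first step: pinning down the formula \eqref{eq:phi_def} so that it is manifestly well defined (representative-independent) and verifying the multiplicativity of $\varphi$ without drowning in the identifications $T\G(P)=\G(TP)$, $TG=G\ltimes\gg$, and $\mathrm{Ad}(P)=(P\times\gg)/G$ simultaneously. Once $\varphi$ is in hand, part (b) is a formal consequence of functoriality together with the standard translation between "$k$-form that is a groupoid morphism into a semi-direct product'' and condition \eqref{mult_rep}.
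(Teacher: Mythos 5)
Your overall strategy (an explicit fiberwise construction of $\varphi$ followed by part (b) as a formal consequence) is the same as the paper's, but the explicit formula you propose for $\varphi$ is incorrect, and this formula is the heart of the lemma. You set $\varphi\bigl(\overline{(u_P(p),w_P(q))}\bigr)=\overline{(p,\,u-\mathrm{Ad}_{g}w)}$ where $g$ is ``the unique group element with $q=\psi_g(p)$''. First, such a $g$ exists only when $p$ and $q$ lie in the same fiber of $\pi:P\to M$, i.e.\ only over isotropy arrows of $\G(P)$; a general arrow $\overline{(p,q)}$ has $\pi(p)\neq\pi(q)$, so your formula is undefined on most of $\Delta_V$. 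Second, even over isotropy arrows the formula is not representative-independent: acting by $(e,\xi)\in TG\cong G\ltimes\gg$ replaces, via \eqref{eq:Psi}, the pair $(u_P(p),w_P(q))$ by $\bigl((u+\xi)_P(p),(w+\xi)_P(q)\bigr)$ without changing $p$, $q$ or $g$, and your formula then changes by $\xi-\mathrm{Ad}_g\xi$, which is nonzero in general. So the map you describe does not exist.

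The correct map is the untwisted one, $\varphi\bigl(\overline{(u_P(p),w_P(q))}\bigr)=\overline{(p,\,u-w)}$: the affine part $\xi$ of the $TG$-action cancels in the difference $u-w$, and the residual diagonal $\mathrm{Ad}_{g_0^{-1}}$ is precisely the equivalence relation defining $\mathrm{Ad}(P)=(P\times\gg)/G$, so this is well defined on all of $\Delta_V$. Your parenthetical description (``the class of $X_p-Y_q$ transported to the $p$-fiber'') is salvageable provided the transport is performed with the groupoid action \eqref{eq:adjact}, namely $\overline{(p,q)}\cdot\overline{(q,w)}=\overline{(p,w)}$ --- note that no $\mathrm{Ad}$-twist appears there --- rather than with $T\psi_{g^{-1}}$. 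With this correction, the remaining steps you outline (checking that $\varphi$ intertwines source, target and multiplication by reducing to representatives with matching middle entries, and deducing part (b) by composing with $\varphi$ and unwinding \eqref{eq:semid}) coincide with the paper's proof and go through.
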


\begin{proof}
We define the map $\varphi: \Delta_V \to \tar^*\mathrm{Ad}(P)$ by
$\varphi(\overline{(u_P(p),v_P(q))}) = \overline{(p, u-v)}$. One can
directly verify that this map is well-defined, and that it is a
morphisms of vector bundles over $\G(P)$; the inverse map
$\tar^*\mathrm{Ad}(P) \to \Delta_V$ is defined, on each fiber over
$\overline{(p,q)}\in \G(P)$, by $\overline{(p,v)} \mapsto
\overline{(v_P(p),0_P(q))}$. To verify that $\varphi$ is a groupoid
morphism, fix $g= \overline{(p_1, q_1)}$, $h= \overline{(p_2, q_2)}
\in \G(P)$, and
$$
\X= \overline{(u^1_P(p_1), v^1_P(q_1))} \in
\Delta_V|_{\overline{(p_1,q_1)}},\;\;\; \Y= \overline{(u^2_P(p_2),
v^2_P(q_2))} \in \Delta_V|_{\overline{(p_2,q_2)}}.
$$
Since $T\tar(\Y)= T\sour(\X)$, we can assume that $q_1=p_2$ and
$v^1=u^2$. So
$$
\varphi(Tm(\X, \Y)) = \varphi( \overline{(u^1_P(p_1), v^2_P(q_2))} =
\overline{(p_1, u^1- v^2)}.
$$
On the other hand,
\begin{equation*}
\begin{split}
\varphi(\X) + g\cdot \varphi(\Y) & =  \overline{(p_1, u^1-v^1)} +
\overline{(p_1, q_1)}
\cdot \overline{(p_2, u^2-v^2)}\\
& = \overline{(p_1, u^1- v^1)} + \overline{(p_1, v^1 - v^2)} =
\overline{(p_1, u^1 - v^2)},
\end{split}
\end{equation*}
hence multiplication is preserved (c.f. Example~\ref{ex:semi}).

The claim in part (b) follows directly from (a) (and
\eqref{eq:semid}).
\end{proof}

\begin{remark}
The observation in Lemma~\ref{lem:isom}, part (a), is an instance of
a more general fact: on any regular Lie groupoid $\G\toto M$, there
is a natural representation of $\G$ on the vector subbundle
$\ker(\rho) \subset A$, where $A$ is the Lie algebroid of $\G$ and
$\rho$ is its anchor; in this case the distribution
$\ker(T\sour)\cap \ker(T\tar)\subseteq T\G$ is multiplicative, and
naturally isomorphic to the semi-direct product groupoid $\G \ltimes
\ker(\rho)$ (as a groupoid and as a vector bundle over $\G$). When
$\G$ is a gauge groupoid $\G(P)$, $\ker(\rho)=\mathrm{Ad}(P)$, and
$\ker(T\sour)\cap \ker(T\tar) = \Delta_V$.
\end{remark}

For a connection $\theta\in \Omega^1(P,\gg)$, let $H \subset TP$ be
its horizontal bundle. For a vector field $X \in \mathfrak{X}(P)$,
let $X^H$ be its projection on $H$: $X^H = (\mathrm{Id}-\Theta)(X)$.
Let $F_\theta \in \Omega^2(P,\gg)$ be the curvature of $\theta$,
$$
F_\theta(X,Y) = -\theta([X^H,Y^H]).
$$
Since $F_\theta$ is invariant and $i_XF_\theta=0$ for $X\in V$, it
may be alternatively viewed as an element in
$\Omega^2(M,\mathrm{Ad}(P))$. Let $K\in \Omega^1(\G(P),T\G(P))$ be
the projection corresponding to $\theta$, and let $R_K\in
\Omega^2(\G(P), T\G(P))$ be its curvature \eqref{eq:pcurv}. Using
\eqref{eq:phi}, we consider
$$
R_K' = \varphi\circ R_K \in \Omega^2(\G(P), \tar^*\mathrm{Ad}(P)).
$$

\begin{lemma}\label{lem:multR}
 $R_K'$ satisfies
\begin{equation}\label{eq:cohtrivial}
R_K'|_g =  g \cdot
 (\sour^*F_\theta) - \tar^*F_\theta,
\end{equation}
where $F_\theta \in \Omega^2(M,
\ad(P))$ is the curvature of $\theta$.
\end{lemma}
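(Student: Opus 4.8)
The plan is to compute $R_K$ explicitly on $\G(P)$ and then push it forward under $\varphi$. First I would exploit the structure of the gauge groupoid: any element of $T\G(P) = \G(TP)$ has the form $\overline{(X_p, Y_q)}$, and by Proposition~\ref{prop:correspondence} the projection $K$ is $K(\overline{(X_p,Y_q)}) = \overline{(\Theta(X_p),\Theta(Y_q))}$, so $(\mathrm{Id}-K)(\overline{(X_p,Y_q)}) = \overline{(X_p^H, Y_q^H)}$. To evaluate the curvature $R_K(\mathcal X,\mathcal Y) = K([(\mathrm{Id}-K)\mathcal X,(\mathrm{Id}-K)\mathcal Y])$ I need to realize tangent vectors to $\G(P)$ as values of vector fields and compute a Lie bracket. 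The natural thing is to lift: given $X,Y \in \mathfrak{X}(P)$, their horizontal parts $X^H,Y^H$ are $G$-invariant vector fields on $P$, hence descend to vector fields on $TP/G$; equivalently, the pairs $(X^H,Y^H)$ on $P\times P$ are diagonal-$G$-invariant and descend to vector fields on $\G(TP)=T\G(P)$ whose flows project nicely. The bracket of two such "product" vector fields $\overline{(X_1^H, X_2^H)}$ and $\overline{(Y_1^H, Y_2^H)}$ on $\G(P)$ is $\overline{([X_1^H,Y_1^H],[X_2^H,Y_2^H])}$, since brackets on a product are componentwise and the quotient map is a local diffeomorphism in the fiber directions being used here.

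Next I would apply $K$ to this bracket: $K(\overline{([X_1^H,Y_1^H],[X_2^H,Y_2^H])}) = \overline{(\Theta([X_1^H,Y_1^H]),\Theta([X_2^H,Y_2^H]))}$. Now recall $\Theta([U^H,W^H]) = (\theta([U^H,W^H]))_P = (-F_\theta(U,W))_P$ by the definition of the curvature $F_\theta$. Thus $R_K(\mathcal X,\mathcal Y)$, evaluated over an arrow $g=\overline{(p,q)}$ with $\mathcal X,\mathcal Y$ projecting to appropriate vectors over $p$ and $q$, equals $\overline{((-F_\theta(X_1,Y_1))_P(p),\,(-F_\theta(X_2,Y_2))_P(q))}$, an element of $\Delta_V|_g$. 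Then I would feed this into the isomorphism $\varphi$ of Lemma~\ref{lem:isom}(a), which sends $\overline{(u_P(p),v_P(q))}$ to $\overline{(p,u-v)}$; here $u = -F_\theta(X_1,Y_1)$ (with $X_1,Y_1$ the vectors over $p$, i.e. the target side) and $v = -F_\theta(X_2,Y_2)$ (source side). Since $F_\theta(X_i,Y_i)$ depends only on the projected vectors $T\pi(X_i),T\pi(Y_i) \in TM$, and on the target side the relevant base vectors are $T\tar(\mathcal X),T\tar(\mathcal Y)$ while on the source side they are $T\sour(\mathcal X),T\sour(\mathcal Y)$, the two terms are precisely $-\tar^*F_\theta$ (viewed in $\ad(P)$ via the frame $p$) and $+\sour^*F_\theta$ (via the frame $q$). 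The remaining point is the $g$-action: $\overline{(p,v)} = g\cdot\overline{(q,v)}$ by \eqref{eq:adjact}, so expressing the source-side contribution through the fiber of $\ad(P)$ over $\sour(g)$ and then acting by $g$ gives exactly $g\cdot(\sour^*F_\theta)$. Collecting terms yields $R_K'|_g = g\cdot(\sour^*F_\theta) - \tar^*F_\theta$.

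The main obstacle I anticipate is bookkeeping the base points and frames correctly: $F_\theta$ lives in $\Omega^2(M,\ad(P))$ only after a choice of frame $p\in P$ over a given base point, and the two halves of $\overline{(p,q)}$ sit over $\tar(g)$ and $\sour(g)$ respectively, so I must be careful that "$u-v$" in the formula for $\varphi$ is genuinely comparing elements in the \emph{same} fiber $\ad(P)_{\tar(g)}$ — which forces the $g\cdot(-)$ on the source term and produces the asymmetric-looking but correct sign pattern. A secondary technical care is justifying that evaluating $R_K$ on \emph{decomposable} lifts $\overline{(X_1,X_2)}$ suffices; this is standard since $R_K$ is tensorial (a genuine 2-form), so its value at a point depends only on the tangent vectors there, and near any point one can choose such product-type extensions. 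The rest is the Maurer–Cartan/curvature identity $\theta([X^H,Y^H]) = -F_\theta(X,Y)$ and the explicit descriptions of $K$ and $\varphi$ already established.
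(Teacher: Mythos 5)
Your proposal is correct and follows essentially the same route as the paper: evaluate $R_K$ on classes $\overline{(X,Y)}$ using ($G$-invariant) horizontal extensions so that the bracket descends componentwise, apply the explicit formulas for $K$ and $\varphi$, and match the two resulting $\gg$-components against $-\tar^*F_\theta$ and $g\cdot(\sour^*F_\theta)$ via the frame change $\overline{(q,w)}\mapsto \overline{(p,w)}$. The only nitpick is your claim that the horizontal part of an arbitrary $X\in\mathfrak{X}(P)$ is $G$-invariant (it is not; one should take invariant extensions, e.g.\ horizontal lifts from $M$, which your tensoriality remark already licenses), but this does not affect the argument.
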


\begin{proof}
Let us fix $g=\overline{(p, q)}\in \G(P)$, $\X = \overline{(X_1,
Y_1)}$, $\Y= \overline{(X_2, Y_2)} \in T\G(P)|_{g}$,
for $X_1, X_2 \in T_pP$ and $Y_1, Y_2 \in T_qP$. By definition (see
\eqref{eq:pcurv}),
$$
R_K(\X, \Y) = K\left( \overline{( [X_1^H, X_2^H](p), [Y_1^H,
Y_2^H](q))}\right) = \overline{( \Theta([X_1^H, X_2^H](p)), \Theta(
[Y_1^H, Y_2^H](q)) )},
$$
where $X_i^H, Y_i^H \in \mathfrak{X}(P)$ are horizontal vector
fields extending $(\mathrm{Id} - \Theta)(X_i)$ and $(\mathrm{Id} -
\Theta)(Y_i)$, respectively, for $i=1,2$. Hence,
$$
\varphi (R_K(\X, \Y) ) = \overline{(p, \theta([X_1^H, X_2^H](p)) -
\theta([Y_1^H, Y_2^H](q)) )}.
$$
On the other hand,
$$
F_\theta(T\tar(\X),T\tar(\Y)) = F_\theta(T\pi(X_1), T\pi(X_2)) =
-\overline{(p, \theta([X_1^H, X_2^H](p)))},
$$
and
$$
\overline{(p, q)} \cdot F_\theta(T\pi(Y_1), T\pi(Y_2)) = -
\overline{(p, q)} \cdot \overline{(q, \theta([Y_1^H, Y_2^H]) )} = -
\overline{(p, \theta([Y_1^H, Y_2^H]) )}.
$$
\end{proof}

\begin{proposition}\label{prop:curv}
If $K\in \Omega^1(\G(P),T\G(P))$ is a projection corresponding to a
connection on $\G(P)$, then $R_K\in \Omega^2(\G(P),T\G(P))$ is
multiplicative.
\end{proposition}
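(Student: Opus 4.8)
The plan is to combine Lemma~\ref{lem:multR} with Lemma~\ref{lem:isom}(b). Since $R_K$ takes values in $\Delta_V$ by construction (the image of $K$, and hence of $R_K = K\circ[(\mathrm{Id}-K)\cdot,(\mathrm{Id}-K)\cdot]$, is contained in $\mathrm{Im}(K)=\Delta_V$), part (b) of Lemma~\ref{lem:isom} applies: $R_K$ is multiplicative if and only if $R_K' = \varphi\circ R_K$ satisfies the cocycle-type identity $(m^*R_K')_{(g,h)} = pr_1^*R_K' + g\cdot pr_2^*R_K'$ for $(g,h)\in\G(P)^{(2)}$. So the whole proof reduces to checking this identity, and Lemma~\ref{lem:multR} gives us an explicit formula for $R_K'$, namely $R_K'|_g = g\cdot(\sour^*F_\theta) - \tar^*F_\theta$.

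The key step is then a short direct verification. Fix $(g,h)\in\G(P)^{(2)}$, so that $\sour(g)=\tar(h)$; write $gh$ for their product. Using $\sour\circ m = \sour\circ pr_2$, $\tar\circ m = \tar\circ pr_1$, and $\sour\circ pr_1 = \tar\circ pr_2$ on $\G(P)^{(2)}$, we compute
\begin{align*}
(m^*R_K')_{(g,h)} &= R_K'|_{gh} = (gh)\cdot(\sour^*F_\theta) - \tar^*F_\theta\\
&= g\cdot\bigl(h\cdot(\sour(h)^*F_\theta)\bigr) - \tar(g)^*F_\theta.
\end{align*}
On the other hand,
\begin{align*}
pr_1^*R_K' + g\cdot pr_2^*R_K' &= \bigl(g\cdot(\sour(g)^*F_\theta) - \tar(g)^*F_\theta\bigr) + g\cdot\bigl(h\cdot(\sour(h)^*F_\theta) - \tar(h)^*F_\theta\bigr)\\
&= g\cdot(\sour(g)^*F_\theta) - \tar(g)^*F_\theta + g\cdot h\cdot(\sour(h)^*F_\theta) - g\cdot(\tar(h)^*F_\theta).
\end{align*}
Since $\sour(g)=\tar(h)$, the term $g\cdot(\sour(g)^*F_\theta)$ cancels with $-g\cdot(\tar(h)^*F_\theta)$, and what remains is exactly $g\cdot h\cdot(\sour(h)^*F_\theta) - \tar(g)^*F_\theta$, matching $(m^*R_K')_{(g,h)}$. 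Hence $R_K'$ is multiplicative in the sense of \eqref{mult_rep}, and by Lemma~\ref{lem:isom}(b) so is $R_K$.

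I do not expect a genuine obstacle here: the content has been front-loaded into Lemmas~\ref{lem:isom} and~\ref{lem:multR}, and what is left is bookkeeping with the source/target identities on $\G(P)^{(2)}$ together with the fact that \eqref{eq:cohtrivial} exhibits $R_K'$ as the coboundary of the (pulled-back) section $F_\theta\in\Omega^2(M,\ad(P))$ — and a coboundary in the complex computing groupoid cohomology with values in $\ad(P)$ is automatically a cocycle, which is precisely the multiplicativity condition \eqref{mult_rep}. The only point requiring a line of care is confirming that $R_K$ indeed takes values in $\Delta_V$ so that Lemma~\ref{lem:isom}(b) is applicable; this is immediate from $R_K = K\circ(\cdots)$ and $\mathrm{Im}(K)=\Delta_V$.
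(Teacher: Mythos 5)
Your proposal is correct and follows exactly the paper's own route: reduce to Lemma~\ref{lem:isom}(b) via the fact that $\mathrm{Im}(R_K)\subseteq \mathrm{Im}(K)=\Delta_V$, and then check that the explicit formula \eqref{eq:cohtrivial} from Lemma~\ref{lem:multR} implies the cocycle condition \eqref{mult_rep} using the identities $\tar\circ m=\tar\circ pr_1$, $\sour\circ m=\sour\circ pr_2$, $\sour\circ pr_1=\tar\circ pr_2$. The only difference is that you write out the cancellation that the paper leaves as a ``direct verification,'' which is a welcome addition.
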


\begin{proof}
Using Lemmas~\ref{lem:isom} and \ref{lem:multR}, the result follows
once we check that condition \eqref{eq:cohtrivial} implies that
\eqref{mult_rep} holds. Considering the maps $m, pr_1, pr_2:
\G(P)^{(2)}\to \G(P)$, this can be directly verified using the
identities $\tar\circ m = \tar\circ pr_1$, $\sour\circ m =
\sour\circ pr_2$, and $\tar\circ pr_2 = \sour\circ pr_1$.
\end{proof}

\begin{remark}
In the context of multiplicative forms on Lie groupoids with
coefficients on representations \cite{CSS}, Lemma~\ref{lem:multR}
may be interpreted as the fact that $R_K'$ is ``exact'', or
``cohomologically trivial'', while the weaker condition
\eqref{mult_rep}, that guarantees multiplicativity, corresponds to
``closedness'' (see \cite[Sec.~2.1 and Sec.~3.4]{CSS}).
\end{remark}

\section{The Fr\"olicher-Nijenhuis bracket}\label{Sec:FN}

Let $N$ be a manifold and $\Omega^\bullet(N)$ be its graded algebra
of differential forms. A {\it degree $l$ derivation} of
$\Omega^\bullet(N)$ is a linear map $D: \Omega^\bullet(N)\to
\Omega^{\bullet + l}(N)$ such that $D(\alpha \wedge\beta)=
D(\alpha)\wedge \beta + (-1)^{pl}\alpha\wedge D(\beta)$, for $\alpha
\in \Omega^{p}(N)$. Any vector-valued form $K \in \Omega^k(N,TN)$
gives rise to a degree $(k-1)$ derivation of $\Omega^\bullet(N)$ by
\begin{align*}
& i_K\omega(X_1, \dots, X_{k+p-1}) =  \\
& \frac{1}{k!(p-1)!}\sum_{\sigma \in
S_{k+p-1}} sgn(\sigma) \,\omega(K(X_{\sigma(1)},
\dots, X_{\sigma(k)}), X_{\sigma(k+1)}, \dots, X_{\sigma(k+p-1)}),
\end{align*}
for $\omega \in \Omega^p(N)$, $X_1, \ldots, X_{k+p-1} \in TN$. It
also gives rise to a degree $k$ derivation of $\Omega^\bullet(N)$
via
$$
 \Lie_{K} = [d, i_K] = di_K -(-1)^{k-1}i_K d,
$$
where $d$ is the exterior differential on $N$.

Given $K\in \Omega^k(N,TN)$ and $L\in \Omega^l(N,TN)$, their {\it
Fr\"olicher-Nijenhuis bracket} is the vector-valued form $[K,L]\in
\Omega^{k+l}(N,TN)$ uniquely defined by the condition
\begin{equation}\label{eq:comm}
\Lie_{[K, L]} = [\Lie_K, \Lie_L] = \Lie_K \Lie_L - (-1)^{kl} \Lie_L
\Lie_K.
\end{equation}
When $K$ and $L$ have degree zero (i.e., they are vector fields on
$N$), \eqref{eq:comm} agrees with the usual Lie bracket of vector
fields. The Fr\"olicher-Nijenhuis bracket makes
$\Omega^\bullet(N,TN)$ into a graded Lie algebra,
and it satisfies the following additional properties (see e.g.
\cite[Ch.~2]{nat}):
\begin{itemize}
\item[(a)] For $K\in \Omega^1(N,TN)$,
\begin{equation}\label{eq:Nij}
\frac{1}{2}[K,K] = N_K,
\end{equation}
where $N_K$ is the Nijenhuis tensor of $K$,
$$
N_K(X,Y)=[K(X),K(Y)]-K([KX,Y]+ [KY,X]) + K^2[X,Y],
$$
for $X,Y \in TN$.

\item[(b)] When $K\in \Omega^1(N,TN)$ is a projection, then
\begin{equation}\label{eq:RR}
\frac{1}{2}[K,K] = R_K + \overline{R}_K,
\end{equation}
where $R_K$ is its curvature and $\overline{R}_K$ is its co-curvature.

\item[(c)] Let $f: N_1\to N_2$ be a smooth map, and $K_i\in \Omega^k(N_i,TN_i)$,
$L_i\in \Omega^l(N_i,TN_i)$, $i=1,2$, be such that $K_1$ is
$f$-related to $K_2$ and $L_1$ is $f$-related to $L_2$. Then
$[K_1,L_1]$ is $f$-related to $[K_2,L_2]$.

\end{itemize}

Regarding property (c), recall that $K_1 \in \Omega^k(N_1,TN_1)$ is
{\it $f$-related} to $K_2 \in \Omega^k(N_2,TN_2)$ if
$$
K_2(Tf(X_1),\ldots,Tf(X_k))=Tf(K_1(X_1,\ldots,X_k)),
$$
for all $X_1,\ldots,X_k \in T_xN$, and $x\in N$. Alternatively,
$K_1$ and $K_2$ are $f$-related if and only if
$$
\Lie_{K_1} \circ f^* = f^* \circ \Lie_{K_2}
$$
where $f^*: \Omega(N_2) \Arrow \Omega(N_1)$ is the pull-back of
differential forms. We refer to the property in (c) above as the
{\it naturality} of the Fr\"olicher-Nijenhuis bracket.

\subsection{The bracket on Lie groupoids}

We now verify that the Fr\"olicher-Nijenhuis bracket on a Lie
groupoid $\G\toto M$ preserves multiplicative vector-valued forms.

We start by giving an alternative characterization of multiplicative
vector-valued forms. We say that $K \in \Omega^k(\G, T\G)$ is
\textit{$(\sour,\tar)$-projectable} if there exists $K_M \in
\Omega^k(M, TM)$ such that $K$ is both $\sour$ and $\tar$-related to
$K_M$.

Any $K \in \Omega^k(\G, T\G)$ gives rise to a
vector valued $k$-form $K\times K$ on $\G\times \G$ given by
$$
K\times K((X_1, Y_1), \dots, (X_k, Y_k)) = (K(X_1, \dots, X_k), K(Y_1, \dots, Y_k)),
$$
for $X_1, \dots, X_k \in T_g\G$ and $Y_1, \dots, Y_k \in T_h\G$;
this form is uniquely characterized by the fact that it is both
$pr_1$ and $pr_2$-related to $K$, where $pr_1, pr_2: \G\times \G
\Arrow \G$ are the natural projections.

\begin{lemma}\label{mult_lemma}
If $K$ is $(\sour,\tar)$-projectable, then $K\times K$ restricts to a vector
valued $k$-form $K^{(2)}$ on the space of composable arrows
$\G^{(2)}$. Moreover, $K$ is multiplicative if and only if $K$ is
$(\sour,\tar)$-projectable and $K^{(2)}$ is $m$-related to $K$.
\end{lemma}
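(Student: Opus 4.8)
The plan is to unwind the condition that the square \eqref{mult_diagram} be a groupoid morphism into a few elementary identities, and to match them with the statements in the lemma. Recall from Section~\ref{sec:LG} that the source, target and multiplication of $\oplus^k T\G\toto\oplus^k TM$ are given componentwise by $T\sour_\G$, $T\tar_\G$ and $Tm_\G$, that composability of $X,Y\in T\G$ in the tangent groupoid means $T\sour_\G(X)=T\tar_\G(Y)$ (so that $(T\G)^{(2)}=T(\G^{(2)})$), and that the tangent-groupoid product of such $X,Y$ is $Tm_\G(X,Y)$. Hence $(K,K_M)$ is a groupoid morphism precisely when, for all $X_1,\dots,X_k\in T_g\G$ and $Y_1,\dots,Y_k\in T_h\G$: \textbf{(i)} $T\sour\big(K(X_1,\dots,X_k)\big)=K_M(T\sour X_1,\dots,T\sour X_k)$; \textbf{(ii)} the same identity with $\sour$ replaced by $\tar$; and \textbf{(iii)} whenever $(X_i,Y_i)\in T_{(g,h)}(\G^{(2)})$ for every $i$, the value $K\big(Tm(X_1,Y_1),\dots,Tm(X_k,Y_k)\big)$ equals $Tm\big(K(X_1,\dots,X_k),K(Y_1,\dots,Y_k)\big)$ (compatibility with units and inversion is then automatic). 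By the definition of $f$-relatedness, (i) and (ii) together say exactly that $K$ is $(\sour,\tar)$-projectable, covering $K_M$.

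First I would establish the restriction claim. Assume $K$ is $(\sour,\tar)$-projectable, covering $K_M$, and take $(X_i,Y_i)\in T_{(g,h)}(\G^{(2)})$ for $i=1,\dots,k$, so that $T\sour X_i=T\tar Y_i$ for all $i$. Using that $K$ is $\sour$- and $\tar$-related to $K_M$,
\begin{align*}
T\sour\big(K(X_1,\dots,X_k)\big) &= K_M(T\sour X_1,\dots,T\sour X_k) \\
&= K_M(T\tar Y_1,\dots,T\tar Y_k) = T\tar\big(K(Y_1,\dots,Y_k)\big),
\end{align*}
so $(K\times K)\big((X_1,Y_1),\dots,(X_k,Y_k)\big)=\big(K(X_1,\dots,X_k),K(Y_1,\dots,Y_k)\big)$ again lies in $T_{(g,h)}(\G^{(2)})$. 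Thus $K\times K$ is tangent to the submanifold $\G^{(2)}\subseteq\G\times\G$ and restricts to a well-defined $K^{(2)}\in\Omega^k(\G^{(2)},T\G^{(2)})$; it is the unique vector-valued form on $\G^{(2)}$ that is $j$-related to $K\times K$ (equivalently, that is $p_1$- and $p_2$-related to $K$), where $j:\G^{(2)}\hookrightarrow\G\times\G$ is the inclusion and $p_i:\G^{(2)}\to\G$ are the projections.

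Next I would identify (iii) with the $m$-relatedness of $K^{(2)}$ and $K$. For $Z_i=(X_i,Y_i)\in T(\G^{(2)})$ we have $Tm(Z_i)=Tm(X_i,Y_i)$ and, by the construction above, $K^{(2)}(Z_1,\dots,Z_k)=\big(K(X_1,\dots,X_k),K(Y_1,\dots,Y_k)\big)$, whence $Tm\big(K^{(2)}(Z_1,\dots,Z_k)\big)=Tm\big(K(X_1,\dots,X_k),K(Y_1,\dots,Y_k)\big)$. So (iii) is precisely the assertion that $K\big(Tm(Z_1),\dots,Tm(Z_k)\big)=Tm\big(K^{(2)}(Z_1,\dots,Z_k)\big)$ for all $Z_i\in T(\G^{(2)})$, i.e. that $K^{(2)}$ is $m$-related to $K$. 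Combining these observations, $K$ is multiplicative precisely when (i), (ii) and (iii) hold, which happens precisely when $K$ is $(\sour,\tar)$-projectable and $K^{(2)}$ is $m$-related to $K$; this is the assertion.

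I do not expect a genuine obstacle: the argument is careful bookkeeping of the componentwise groupoid structures on $\oplus^k T\G$ and $T\G$ against the defining property of $f$-relatedness. The one point needing care is the tangency of $K\times K$ to $\G^{(2)}$ — and this is exactly where the \emph{full} $(\sour,\tar)$-projectability is used, not $\sour$-projectability alone — together with keeping straight which of $\sour$, $\tar$ enters composability, both for $\G^{(2)}$ and for $(T\G)^{(2)}$.
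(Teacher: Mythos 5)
Your proof is correct and follows essentially the same route as the paper's: you unwind the groupoid-morphism condition for \eqref{mult_diagram} into source/target compatibility (which is $(\sour,\tar)$-projectability) plus compatibility with multiplication (which is $m$-relatedness of $K^{(2)}$ and $K$), and you verify tangency of $K\times K$ to $\G^{(2)}$ by the same direct computation the paper leaves implicit. The only difference is that you spell out the ``direct computation'' in full, which is a welcome addition rather than a deviation.
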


\begin{proof}
A direct computation shows that $K\times K$ restricts to $\G^{(2)}$ when $K$ is
$(\sour,\tar)$-projectable.

Recall that $K$ is multiplicative if and only if there exists $K_M
\in \Omega^k(M,TM)$ such that \eqref{mult_diagram} is a groupoid
morphism. The existence of $K_M$ is equivalent to $K$ being
$(\sour,\tar)$-projectable, whereas the identity
$$
\begin{array}{rl}
K(Tm(X_1, Y_1), \dots, Tm(X_k, Y_k))  = & Tm(K(X_1, \dots, X_k), K(Y_1, \dots, Y_k)\\
                                      = & Tm(K^{(2)}((X_1, Y_1), \dots, (X_k,
                                      Y_k))),
\end{array}
$$
for $(X_1, Y_1), \dots, (X_k, Y_k) \in T_{(g,h)} \G^{(2)}$, shows
that $K$ intertwines the multiplication if and only if $K$ and
$K^{(2)}$ are $m$-related.
\end{proof}

We also need the following observation:

\begin{lemma}\label{2_frolicher}
If $K$ and $L$ are $(\sour,\tar)$-projectable, then
$$
[K,L]^{(2)} = [K^{(2)}, L^{(2)}].
$$
\end{lemma}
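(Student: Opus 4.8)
The plan is to exploit the naturality of the Fr\"olicher-Nijenhuis bracket (property (c) above) together with the characterization of $K^{(2)}$ as a form on $\G^{(2)}$ that is compatible, via the two projections $pr_1, pr_2:\G^{(2)}\to\G$, with $K$. First I would recall that, by Lemma~\ref{mult_lemma}, the hypothesis that $K$ and $L$ are $(\sour,\tar)$-projectable guarantees that $K\times K$ and $L\times L$ restrict to well-defined vector-valued forms $K^{(2)}$ and $L^{(2)}$ on $\G^{(2)}\subseteq\G\times\G$; the same holds for $[K,L]$, since $[K,L]$ is again $(\sour,\tar)$-projectable (it covers $[K_M,L_M]$ on $M$ by naturality applied to $\sour$ and $\tar$), so $[K,L]^{(2)}$ makes sense. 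Thus all three objects $[K,L]^{(2)}$, $K^{(2)}$, $L^{(2)}$ are legitimate vector-valued forms on $\G^{(2)}$, and the identity to be proved lives entirely on $\G^{(2)}$.

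The key step is the following: by construction, $K^{(2)}$ is $pr_i$-related to $K$ for $i=1,2$ (this is exactly how $K\times K$, hence its restriction $K^{(2)}$, is characterized), and likewise for $L$. By the naturality property (c) applied to the inclusion $\G^{(2)}\hookrightarrow\G\times\G$ composed with $pr_i$ — or more directly to the restricted projections $pr_i\colon\G^{(2)}\to\G$ — the bracket $[K^{(2)},L^{(2)}]$ is $pr_i$-related to $[K,L]$ for both $i=1,2$. On the other hand, $[K,L]^{(2)}$ is by definition the restriction of $[K,L]\times[K,L]$ to $\G^{(2)}$, and is therefore also $pr_i$-related to $[K,L]$ for $i=1,2$. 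So both $[K^{(2)},L^{(2)}]$ and $[K,L]^{(2)}$ are vector-valued $(k+l)$-forms on $\G^{(2)}$ that are $pr_1$- and $pr_2$-related to the same form $[K,L]$ on $\G$. To conclude, I would invoke the uniqueness clause: a vector-valued form on $\G^{(2)}$ is completely determined by being simultaneously $pr_1$- and $pr_2$-related to a given form on $\G$, because the map $(Tpr_1, Tpr_2)\colon T\G^{(2)}\to T\G\times T\G$ is injective (indeed $T(\G^{(2)})=(T\G)^{(2)}\subseteq T\G\times T\G$), so the value of any such form is forced on every tuple of tangent vectors. This uniqueness is precisely the statement already used in the paragraph preceding Lemma~\ref{mult_lemma} to characterize $K\times K$, now applied to $[K,L]$.

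The main obstacle, and the only point requiring genuine care, is making the naturality argument rigorous despite $pr_i$ being defined on $\G^{(2)}$ rather than all of $\G\times\G$: property (c) is stated for a smooth map between manifolds, and $\G^{(2)}$ is an embedded submanifold of $\G\times\G$, so one must either (i) first apply naturality to the genuine projections $pr_i\colon\G\times\G\to\G$ to get that $[K\times K,L\times L]$ is $pr_i$-related to $[K,L]$, then restrict to $\G^{(2)}$ and note that restriction of a vector-valued form commutes with the Fr\"olicher-Nijenhuis bracket (i.e. $[K\times K,L\times L]|_{\G^{(2)}}=[(K\times K)|_{\G^{(2)}},(L\times L)|_{\G^{(2)}}]=[K^{(2)},L^{(2)}]$, which is itself an instance of naturality for the inclusion $\G^{(2)}\hookrightarrow\G\times\G$, valid because $K\times K$ and $L\times L$ are tangent to $\G^{(2)}$ by $(\sour,\tar)$-projectability); or (ii) apply naturality directly to the smooth maps $pr_i\colon\G^{(2)}\to\G$. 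Either route works; I would spell out route (i) since it cleanly separates the restriction step (which is where the $(\sour,\tar)$-projectability hypothesis is used) from the naturality step. Once this bookkeeping is in place, the identity $[K,L]^{(2)}=[K^{(2)},L^{(2)}]$ is immediate from uniqueness.
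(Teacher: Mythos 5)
Your proposal is correct and follows essentially the same route as the paper: the paper's proof also observes that $K^{(2)}$ and $L^{(2)}$ are $pr_1$- and $pr_2$-related to $K$ and $L$, invokes naturality of the Fr\"olicher-Nijenhuis bracket to conclude that $[K^{(2)},L^{(2)}]$ is $pr_1$- and $pr_2$-related to $[K,L]$, and then appeals to the uniqueness of a form on $\G^{(2)}$ with this property. The extra care you take in justifying the uniqueness (injectivity of $(Tpr_1,Tpr_2)$ on $T\G^{(2)}=(T\G)^{(2)}$) and in handling the restriction to the submanifold $\G^{(2)}$ only makes explicit what the paper leaves implicit.
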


\begin{proof}
Since $K^{(2)}$ (resp. $L^{(2)}$) is $pr_1$ and $pr_2$-related to
$K$ (resp. $L$), it follows from the naturality of the
Fr\"olicher-Nijenjuis bracket that $[K^{(2)}, L^{(2)}]$ is both
$pr_1$ and $pr_2$-related to $[K,L]$. Since $[K, L]^{(2)}$ is the
unique vector-valued form satisfying this property, it follows that
$[K,L]^{(2)} = [K^{(2)}, L^{(2)}]$.
\end{proof}

\begin{theorem} \label{thm:compatible}
Let $K\in \Omega^k(\G,T\G)$ and $L \in
\Omega^l(\G,T\G)$ be multiplicative. Then  $[K,L]$ is multiplicative.
\end{theorem}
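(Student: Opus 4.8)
The plan is to reduce multiplicativity of $[K,L]$ to the two characterizations already assembled in the lemmas of this subsection, so that the proof becomes a short chain of implications with no new computation. By Lemma~\ref{mult_lemma}, it suffices to show two things: that $[K,L]$ is $(\sour,\tar)$-projectable, and that $[K,L]^{(2)}$ is $m$-related to $[K,L]$.

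First I would dispatch $(\sour,\tar)$-projectability. Since $K$ is multiplicative it is in particular $(\sour,\tar)$-projectable, so there are $K_M, L_M \in \Omega^\bullet(M,TM)$ with $K$ both $\sour$- and $\tar$-related to $K_M$ and likewise for $L$. By naturality of the Fr\"olicher--Nijenhuis bracket (property (c)), $[K,L]$ is then both $\sour$- and $\tar$-related to $[K_M,L_M] \in \Omega^{k+l}(M,TM)$; that is exactly the statement that $[K,L]$ is $(\sour,\tar)$-projectable, with $([K,L])_M = [K_M,L_M]$.

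Next I would handle the multiplication condition. Because $K$ and $L$ are multiplicative, Lemma~\ref{mult_lemma} gives that $K^{(2)}$ is $m$-related to $K$ and $L^{(2)}$ is $m$-related to $L$. Applying naturality (property (c)) to the map $m: \G^{(2)} \to \G$, the bracket $[K^{(2)}, L^{(2)}]$ is $m$-related to $[K,L]$. Now invoke Lemma~\ref{2_frolicher}, which (using that $K$ and $L$ are $(\sour,\tar)$-projectable) identifies $[K^{(2)},L^{(2)}]$ with $[K,L]^{(2)}$. Hence $[K,L]^{(2)}$ is $m$-related to $[K,L]$, and combined with the projectability established above, Lemma~\ref{mult_lemma} yields that $[K,L]$ is multiplicative.

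There is no real obstacle here: all the genuine work has been front-loaded into Lemma~\ref{mult_lemma} (reformulating multiplicativity via $(\sour,\tar)$-projectability plus an $m$-relatedness condition) and Lemma~\ref{2_frolicher} (compatibility of the $(\cdot)^{(2)}$ operation with the bracket), while the only tool applied to $[K,L]$ itself is the naturality property (c). The one point to state carefully is that naturality is being applied along three different maps — $\sour$, $\tar$, and $m$ — and that the hypotheses of Lemma~\ref{2_frolicher} are met precisely because multiplicativity of $K$ and $L$ implies their $(\sour,\tar)$-projectability; I would make both of these explicit so the reader sees the argument is airtight.
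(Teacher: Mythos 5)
Your proposal is correct and follows essentially the same route as the paper's own proof: naturality of the Fr\"olicher--Nijenhuis bracket along $\sour$, $\tar$, and $m$, combined with Lemma~\ref{2_frolicher} to identify $[K^{(2)},L^{(2)}]$ with $[K,L]^{(2)}$, and Lemma~\ref{mult_lemma} to conclude. The points you flag as needing care (which maps naturality is applied along, and why the hypotheses of Lemma~\ref{2_frolicher} hold) are indeed the only places where anything could go wrong, and you handle them correctly.
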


\begin{proof}
By naturality, $[K,L]$ is $\sour$ and $\tar$-related to $[K_M,
L_M]$. So $[K, L]$ is $(\sour,\tar)$-projectable. Similarly, since
$K^{(2)}$ (resp. $L^{(2)}$) and $K$ (resp. $L$) are $m$-related, it
follows that $[K^{(2)},L^{(2)}]$ and $[K, L]$ are $m$-related. By
Lemma \ref{2_frolicher},  $[K,L]^{(2)}$ and $[K,L]$ are $m$-related.
The result now follows from Lemma \ref{mult_lemma}.
\end{proof}

\begin{corollary}
The following holds:
\begin{itemize}
 \item[(a)] If $K\in \Omega^1(\G,T\G)$ is multiplicative, then its Nijenhuis
 tensor $N_K\in \Omega^2(\G,T\G)$ is multiplicative.
 \item[(b)] If $K\in \Omega^1(\G,T\G)$ is a multiplicative projection, then its curvature
$R_K\in \Omega^2(\G,T\G)$ is multiplicative.
\end{itemize}
\end{corollary}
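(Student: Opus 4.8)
The plan is to derive both parts as immediate consequences of Theorem~\ref{thm:compatible}, combined with the identities \eqref{eq:Nij} and \eqref{eq:RR} that express the self-bracket of a vector-valued $1$-form in terms of its Nijenhuis tensor and its curvature. I will also use the elementary fact that a constant scalar multiple of a multiplicative vector-valued form is again multiplicative; this is immediate, since multiplicativity of $L\in\Omega^k(\G,T\G)$ means that the induced map $\oplus^k T\G\to T\G$ is a morphism of Lie groupoids over $\oplus^k TM\to TM$, and fibrewise scalar multiplication is such a morphism because all the structure maps of $T\G\toto TM$ are vector-bundle morphisms over those of $\G\toto M$. Granting this, part (a) follows at once: if $K\in\Omega^1(\G,T\G)$ is multiplicative, then $[K,K]$ is multiplicative by Theorem~\ref{thm:compatible}, hence so is $N_K=\tfrac12[K,K]$ by \eqref{eq:Nij} (and $N_K$ covers $N_{K_M}=\tfrac12[K_M,K_M]$).

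For part (b) assume in addition that $K^2=K$. By \eqref{eq:RR} we have $\tfrac12[K,K]=R_K+\overline{R}_K$, and the left-hand side is multiplicative as above; what remains is to isolate $R_K$ from this sum. I would argue that $R_K$ takes values in $\mathrm{Im}(K)$, directly from its definition \eqref{eq:pcurv}, whereas $\overline{R}_K$, being the curvature of the projection $\mathrm{Id}-K$, takes values in $\mathrm{Im}(\mathrm{Id}-K)=\ker(K)$. Since $K$ restricts to the identity on $\mathrm{Im}(K)$ and to zero on $\ker(K)$, applying $K$ to both sides of $\tfrac12[K,K]=R_K+\overline{R}_K$ yields $R_K=K\circ\bigl(\tfrac12[K,K]\bigr)$, i.e.\ $R_K$ is the vector-valued $2$-form $(X,Y)\mapsto K\bigl(\tfrac12[K,K](X,Y)\bigr)$. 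Now $K\colon T\G\to T\G$ is a groupoid morphism (as $K$ is multiplicative) and $\tfrac12[K,K]\colon\oplus^2 T\G\to T\G$ is a groupoid morphism (by part (a)), so their composition is one too; hence $R_K$ is multiplicative, covering $R_{K_M}=K_M\circ N_{K_M}$.

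I do not anticipate a genuine obstacle, since all the substance already sits in Theorem~\ref{thm:compatible}; the only step needing a moment's care is the extraction of $R_K$ from $R_K+\overline{R}_K$ in part (b). An alternative way to carry that out is via Lemma~\ref{lem:distrib1}: for a multiplicative projection $K$, both $\mathrm{Im}(K)$ and $\ker(K)$ are multiplicative distributions, so $T\G=\mathrm{Im}(K)\oplus\ker(K)$ is a splitting into VB-subgroupoids of $T\G\toto TM$, and $R_K+\overline{R}_K$ is precisely the decomposition of the multiplicative form $\tfrac12[K,K]$ along this splitting, whose two components are therefore themselves multiplicative.
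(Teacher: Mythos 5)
Your proposal is correct and follows essentially the same route as the paper: part (a) is $N_K=\tfrac12[K,K]$ combined with Theorem~\ref{thm:compatible}, and part (b) rests on the identity $R_K=K\circ N_K$ for a projection $K$ (which the paper likewise extracts from \eqref{eq:Nij} and \eqref{eq:RR}), concluding because a composition of groupoid morphisms is a groupoid morphism. Your extra justification of that identity via the image/kernel decomposition, and the alternative remark via Lemma~\ref{lem:distrib1}, are sound but add nothing beyond the paper's argument.
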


\begin{proof}
Part (a) follows directly from \eqref{eq:Nij}.

From \eqref{eq:Nij} and \eqref{eq:RR}, we see that, when $K$ is a
projection, $R_K(X,Y)= K(N_K(X,Y))$ for $X,Y\in T\G$, and this
proves part (b), since both $K$ and $N_K$ are multiplicative, i.e.,
groupoid morphisms (and hence so is their composition).
\end{proof}

Note that part (a) recovers \cite[Prop.~3.3]{LSX}; Part (b)
generalizes Prop.~\ref{prop:curv}.

\subsection{Relation with the Bott-Shulman-Stasheff complex}

In this final section, we provide an alternative characterization of
multiplicative vector-valued forms leading to another viewpoint to
Thm.~\ref{thm:compatible}.

For a smooth manifold $N$, as previously mentioned, the
Fr\"olicher-Nijenhuis bracket makes $\Omega^\bullet(N,TN)$ into a
graded Lie algebra. This is a consequence of fact that the map
\begin{equation}\label{eq:map}
K \mapsto \Lie_{K}
\end{equation}
identifies vector-valued forms on $N$ with derivations of
$\Omega^\bullet(N)$ commuting (always in the graded sense) with the
exterior differential \cite{FN}, which is itself a graded Lie
algebra with respect to commutators. The Fr\"olicher-Nijenhuis
bracket is the unique bracket on $\Omega^\bullet(N,TN)$ for which
\eqref{eq:map} is an isomorphism of graded Lie algebras (see
\eqref{eq:comm}). We will show that this result extends to
multiplicative vector-valued forms on Lie groupoids.

For a Lie groupoid $\G\toto M$, let us consider the associated
simplicial manifold $N(\G)$, known as its {\it nerve}, defined as
follows: for each $p\in \mathbb{N}$, its $p$ component is
$\G^{(p)}$, the string of $p$ composable arrows (i.e.,
$(g_1,\ldots,g_p) \in \G^p$ satisfying $\sour(g_{i+1})=\tar(g_i)$);
its face maps are $\partial_i^{p-1}: \G^{(p)} \Arrow \G^{(p-1)}$,
$i=0, \ldots, p$, given by
$$
\partial_i^{p-1}(g_1, \dots, g_p) =
\begin{cases}
(g_2, \dots, g_p), & \text{ if } i=0,\\
(g_1, \dots, g_{i-1}, g_{i}g_{i+1},  g_{i+2}, \dots, g_{p}), & \text{ if } 1 \leq i \leq p-1,\\
(g_1, \dots, g_{p-1}), & \text{ if } i=p,
\end{cases}
$$
for $p\geq 1$, and $\partial^0_0=\sour$, $\partial^0_1=\tar$, for
$p=1$;
the degeneracy maps $s_i^p: \G^{(p-1)} \Arrow \G^{(p)}$, $i=0,
\ldots, p-1$, are defined by
\begin{align*}
s_i^p(g_1, \dots, g_{p-1}) &= (g_1, \dots, g_i, 1_{t(g_{i+1})},
g_{i+1}, \dots, g_{p-1})\\& = (g_1, \dots, g_i, 1_{s(g_{i})},
g_{i+1}, \dots, g_{p-1}).
\end{align*}
For convenience, we recall the identities relating the face and
degeneracy maps:
\begin{equation}\label{face_deg}
\partial_j^{p-1}\circ s_i^p =
 \begin{cases}
s_{i-1}^{p-1}\circ \partial_j^{p-2}, & \text{ if $j < i$},\vspace{5pt}\\
\mathrm{Id}_{\G^{(p-1)}}, & \text{ if $j=i, i+1$},\vspace{5pt}\\
s_i^{p-1} \circ \partial_{j-1}^{p-2}, & \text{ if $j > i$}.
\end{cases}
\end{equation}

We consider the associated double complex
$\Omega^\bullet(\G^{(\bullet)})$, referred to as the {\it
Bott-Shulman-Stasheff} complex, with differentials given by the
exterior derivative $d: \Omega^q(\G^{(\bullet)})\to
\Omega^{q+1}(\G^{(\bullet)})$ and by
$$
\delta: \Omega^\bullet(\G^{(p-1)}) \Arrow
\Omega^\bullet(\G^{(p)}),\;\;\; \delta = \sum_{i=0}^{p}(-1)^{i}
(\partial_i^p)^*,
$$
and whose total cohomology (also known as the {\em de Rham
cohomology} of $\G$) agrees with the cohomology of the geometric
realization of $N(\G)$ \cite{Bott} (see also \cite{AC} and
references therein).

A {\em degree $l$ derivation} of $\Omega^{\bullet}(\G^{(\bullet)})$
is a sequence $\overline{D}=(D_0, D_1, \dots)$, where each $D_p$ is
a degree $l$-derivation of $\Omega^{\bullet}(\G^{(p)})$ and
\begin{equation}\label{face_rel}
(s_i^p)^*\circ D_p = D_{p-1} \circ (s_i^p)^*,
\end{equation}
for all $p$ and $i = 0,\ldots, p-1$.
The componentwise commutator turns the space of derivations of
$\Omega^{\bullet}(\G^{(\bullet)})$ into a graded Lie algebra. The
subspace of derivations of $\Omega^{\bullet}(\G^{(\bullet)})$
commuting with its total differential is a graded Lie subalgebra.

\begin{proposition}\label{FN_corresp}
There is a (graded) linear isomorphism between the space of
multiplicative vector-valued forms on $\G$ and the space of
derivations of $\Omega^{\bullet}(\G^{(\bullet)})$ commuting with the
total differential. Explicitly, the map taking a multiplicative $K
\in \Omega^k(\G, T\G)$ to a degree $k$ derivation is given by
\begin{equation}\label{D_K}
K \mapsto (\Lie_{K_M}, \Lie_{K}, \dots, \Lie_{K^{(p)}}, \dots),
\end{equation}
where $K^{(p)}$ is the restriction of $(K\times \dots \times K) \in
\Omega^k(\G^p,T\G^p)$ to $\G^{(p)}$.
\end{proposition}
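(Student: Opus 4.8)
The plan is to show that the map \eqref{D_K} is well-defined, linear, injective, surjective onto the subspace of derivations commuting with the total differential, and graded. Most of these are essentially bookkeeping once the following facts are in place: (i) each $K^{(p)}$ is genuinely a vector-valued form on $\G^{(p)}$ when $K$ is multiplicative; (ii) the sequence $(\Lie_{K_M},\Lie_K,\Lie_{K^{(2)}},\dots)$ satisfies the simplicial compatibility \eqref{face_rel} and commutes with $\delta$; and (iii) the correspondence $K\leftrightarrow(\Lie_{K^{(p)}})_p$ is a bijection. The key observation I would use throughout is the $f$-relatedness reformulation of $\Lie$: $K_1$ is $f$-related to $K_2$ iff $\Lie_{K_1}\circ f^* = f^*\circ\Lie_{K_2}$ (recalled in the excerpt), together with naturality (c) of the Fr\"olicher-Nijenhuis bracket.

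First I would record that multiplicativity of $K$ propagates to all simplicial levels. By Lemma~\ref{mult_lemma}, multiplicativity of $K$ means $K$ is $(\sour,\tar)$-projectable and $K^{(2)}$ is $m$-related to $K$; an induction on $p$ using the description of the face maps $\partial_i^{p-1}$ (each is built from $\sour$, $\tar$, or a multiplication) then shows that $K^{(p)}$, the restriction of $K\times\cdots\times K$ to $\G^{(p)}$, is well-defined as a vector-valued form and is $\partial_i^{p}$-related to $K^{(p-1)}$ for every $i$. This is the analogue, at each level, of the two facts that $K$ is $\sour$- and $\tar$-related to $K_M$ and that $K^{(2)}$ is $m$-related to $K$; the degeneracy maps $s_i^p$ are inclusions of faces, so $K^{(p)}$ is automatically $s_i^p$-related to $K^{(p-1)}$ as well. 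Dualizing via the $f$-relatedness reformulation, this yields exactly $(\partial_i^{p})^*\Lie_{K^{(p-1)}} = \Lie_{K^{(p)}}(\partial_i^{p})^*$ and $(s_i^p)^*\Lie_{K^{(p)}} = \Lie_{K^{(p-1)}}(s_i^p)^*$; the first, summed over $i$ with signs, says the sequence commutes with $\delta$, the second is precisely \eqref{face_rel}, and commutation with $d$ is automatic since each $\Lie_{K^{(p)}}=[d,i_{K^{(p)}}]$. Hence \eqref{D_K} does land in the subspace of derivations of $\Omega^\bullet(\G^{(\bullet)})$ commuting with the total differential, and linearity in $K$ is clear.

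For injectivity, the component $\Lie_K$ alone determines $K$ (by the Fr\"olicher-Nijenhuis theorem for the single manifold $\G$), so the map is injective. Surjectivity is the main point and the step I expect to be the real obstacle: given a derivation $\overline{D}=(D_0,D_1,\dots)$ commuting with the total differential, each $D_p$ commutes with $d$ on $\Omega^\bullet(\G^{(p)})$, so by the single-manifold result $D_p=\Lie_{K_p}$ for a unique $K_p\in\Omega^k(\G^{(p)},T\G^{(p)})$; one must then show $K_1\in\Omega^k(\G,T\G)$ is multiplicative with $(K_1)_M=K_0$ and that $K_p=(K_1)^{(p)}$ for all $p$. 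The commutation of $\overline D$ with $\delta$ gives $(\partial_i^p)^*D_{p-1}=D_p(\partial_i^p)^*$ after one checks that the individual terms in $\delta = \sum_i(-1)^i(\partial_i^p)^*$ must match up — here one uses that the $\partial_i^p$ for different $i$ have generically distinct images/differentials so no cancellation can occur, an argument one can make by working at the level of $1$-jets or by a density/general-position argument on $\G^{(p)}$. Given $(\partial_i^p)^*D_{p-1}=D_p(\partial_i^p)^*$, the $f$-relatedness dictionary forces $K_{p-1}$ to be $\partial_i^p$-related to $K_p$; for $p=1$ this says $K_1$ is $\sour$- and $\tar$-related to $K_0$ (so $(\sour,\tar)$-projectable) and for $p=2$ that $K_1$ is $m$-related to $K_2$. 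But $(\sour,\tar)$-projectability of $K_1$ already pins down the restriction of $K_1\times K_1$ to $\G^{(2)}$, and being $m$-related to it forces $K_2=(K_1)^{(2)}$; by Lemma~\ref{mult_lemma}, $K_1$ is multiplicative. The same argument inductively identifies $K_p=(K_1)^{(p)}$, so $\overline D$ is the image of $K_1$ under \eqref{D_K}. Finally, that the bijection is graded Lie is not asserted here (the proposition only claims a graded linear isomorphism), but the compatibility with brackets — which gives the graded-Lie-subalgebra statement and re-proves Thm.~\ref{thm:compatible} — follows from naturality applied at each level together with Lemma~\ref{2_frolicher}: $[K,L]^{(p)}=[K^{(p)},L^{(p)}]$ for all $p$, whence $\Lie_{[K,L]^{(p)}}=[\Lie_{K^{(p)}},\Lie_{L^{(p)}}]$, i.e.\ \eqref{D_K} intertwines the Fr\"olicher-Nijenhuis bracket with the componentwise commutator.
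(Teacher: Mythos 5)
Your forward direction (multiplicativity propagates to all $K^{(p)}$, yielding a derivation satisfying \eqref{face_rel} and commuting with the total differential) and your use of the single-manifold Fr\"olicher--Nijenhuis correspondence to get $D_p=\Lie_{K_p}$ both match the paper. The gap is in the surjectivity step, at the point where you claim that commutation with $\delta$ yields the \emph{individual} relations $(\partial_i^{p})^*D_{p-1}=D_p(\partial_i^{p})^*$ because ``the $\partial_i^{p}$ for different $i$ have generically distinct images/differentials so no cancellation can occur.'' This is not a proof, and the claim that the $\delta$-relation alone forces term-by-term matching is false in general. Concretely, for a bundle of Lie groups (e.g.\ $\G=M\times G\toto M$ with $\sour=\tar=pr_M$) one has $\sour^*=\tar^*$, so $\delta=\sour^*-\tar^*$ vanishes identically on $\Omega^\bullet(M)$ and the relation $D_1\circ\delta=\delta\circ D_0$ carries no information whatsoever about $\sour$- or $\tar$-relatedness of $K_1$ and $K_0$; total cancellation does occur. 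No general-position or $1$-jet argument can rescue this, since the obstruction is structural, not generic.

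What the paper does instead is extract the individual face relations by combining the $\delta$-relation \emph{one level up} with the degeneracy relations \eqref{face_rel} and the simplicial identities \eqref{face_deg}: composing $D_2\circ\delta=\delta\circ D_1$ with $(s_0^2)^*$ collapses $(s_0^2)^*\circ\delta$ to $(\epsilon\circ\tar)^*$ (two of the three terms cancel via $\partial_j\circ s_0=\mathrm{id}$), and after cancelling $\epsilon^*$ (using that $\epsilon$ is an immersion) this gives $\tar$-relatedness of $K_1$ and $K_0$; $(s_1^2)^*$ gives $\sour$-relatedness. At level $p=2$, composing with $(s_0^3)^*$ and $(s_2^3)^*$ isolates the outer faces $\partial_0^1,\partial_2^1$, and the middle face $m=\partial_1^1$ is then recovered by writing $m^*=(\partial_0^1)^*+(\partial_2^1)^*-\delta$ and using the relations already established. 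You should replace your ``no cancellation'' step by this degeneracy-composition argument (or an equivalent one); as written, the surjectivity proof does not go through. The rest of your outline, including the remark that the bijection intertwines the Fr\"olicher--Nijenhuis bracket with the componentwise commutator via Lemma~\ref{2_frolicher}, is correct.
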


For the Lie groupoid $N \toto N$, with $\sour=\tar=\mathrm{id}_N$,
Proposition \ref{FN_corresp} boils down to the correspondence (c.f.
\eqref{eq:map}) between vector-valued forms on $N$ and derivations
of $\Omega^\bullet(N)$ commuting with $d$. Note also that
Thm.~\ref{thm:compatible} is a consequence of
Prop.~\ref{FN_corresp}: the map \eqref{D_K} induces a graded Lie
bracket on multiplicative vector-valued forms on $\G$ which is
nothing but the restriction of the Fr\"olicher-Nijenhuis bracket.

\begin{proof}
For a multiplicative $K \in \Omega^k(\G, T\G)$, one may directly
verify that the restrictions $K^{(p)}$ are well defined (see
Lemma~\ref{mult_lemma}), and that the right-hand side of \eqref{D_K}
satisfies \eqref{face_rel} and commutes with the total differential.

Let $\overline{D}$ be a degree-$k$ derivation  of $\Omega^\bullet
(\G^{(\bullet)})$. The fact that $\overline{D}$ commutes with the
total differential gives the conditions
\begin{align}
\label{del:comm}D_p \circ \delta & = \delta\circ D_{p-1}\\
\label{deRham:comm} D_p \circ d & = (-1)^k d\circ  D_p,
\end{align}
for each $p$. Condition \eqref{deRham:comm} is simply that
$[D_p,d]=0$, so it implies that there exists $K_p \in
\Omega^{k}(\uG{p})$ such that $D_p = \Lie_{K_p}$ (see
\cite[Sec.~8.5]{nat}). So we are left with proving that $K=K_1$ is
multiplicative, covers $K_M=K_0$, and $K_p = K^{(p)}$.

From \eqref{face_rel} we see that $K_p$ and $K_{p-1}$ are
$s_i^p$-related, for $i=0, \dots, p-1$. In particular, $K$ and $K_M$
are $\epsilon$-related.

When $p=1$, \eqref{del:comm} and the fact that $K_2$ and $K$ are
$s^2_0$-related imply that
\begin{equation}\label{p=1}
\Lie_{K}\circ(s^2_0)^*\circ \delta  = (s^2_0)^*\circ \Lie_{K_2}
\circ \delta = (-1)^k (s^2_0)^*\circ\delta  \circ \Lie_K.
\end{equation}
Since $(s^2_0)^*\circ \delta = (\epsilon \circ \tar)^*$, from
\eqref{face_deg} one gets
$$
\Lie_K \circ \tar^* \circ \epsilon^* = \tar^*\circ \epsilon^* \circ
\Lie_K = \tar^* \circ \Lie_{K_M} \circ \epsilon^*.
$$
The fact that $\epsilon$ is an immersion implies that $\Lie_K \circ
\tar^* = \tar^* \circ \Lie_{K_M}$, thus proving that $K$ and $K_{M}$
are $\tar$-related. To obtain the analogous result for the source
map, it suffices to apply $(s^2_1)^*$ to \eqref{del:comm}. This
proves that $K$ covers $K_M$.

To verify the compatibility of $K$ with the multiplication on $\G$,
one needs to work at the level $p=2$. In this case, \eqref{del:comm}
reads
\begin{equation}\label{p=2}
\Lie_{K_3} \circ \delta = \delta \circ \Lie_{K_2}.
\end{equation}
By applying $(s_0^3)^*$ to \eqref{p=2} and using that $K_3$ and
$K_2$ are $s_0^3$-related, one gets
$$
\Lie_{K_2} \circ (s_0^3)^* \circ \delta = (s_0^3)^*\circ \delta
\circ \Lie_{K_2}.
$$
The identities \eqref{face_deg} imply that $(s_0^3)^*\circ \delta =
((\partial_0^1)^*-\delta) \circ (s_0^2)^*$. Hence
\begin{equation*}
\begin{split}
\Lie_{K_2}\circ (\partial^1_0)^*\circ (s_0^2)^* - \Lie_{K_2} \circ \delta \circ (s_0^2)^* = & (\partial^1_0)^*\circ (s_0^2)^*\circ \Lie_{K_2} - \delta \circ (s_0^2)^* \circ \Lie_{K_2}\\
= & (\partial_0^1)^* \circ \Lie_K \circ (s^2_0)^* - \delta \circ \Lie_K \circ (s_0^2)^*,\\
\end{split}
\end{equation*}
which implies that $\Lie_{K^{(2)}}\circ (\partial^1_0)^*\circ
(s_0^2)^* = (\partial_0^1)^* \circ \Lie_K \circ (s^2_0)^*$. Since
$s^2_0$ is an immersion, we conclude that $K_2$ and $K$ are
$\partial^1_0$-related. Arguing similarly with $s^3_2$, one obtains
that $K_2$ and $K$ are $\partial^1_2$-related. These two facts imply
that $K_2 = K^{(2)} = (K\times K)|_{\uG{2}}$. Moreover, as
$m=\partial^1_1$, one has that
$$
m^* \circ \Lie_{K} = ((\partial_0^1)^* + (\partial_2^1)^* - \delta)
\circ \Lie_{K} = \Lie_{K^{(2)}} \circ ((\partial_0^1)^* +
(\partial_2^1)^* - \delta) = \Lie_{K^{(2)}} \circ m^*,
$$
which proves that $K^{(2)}$ and $K$ are $m$-related. This concludes
the proof that $K$ is multiplicative.

To prove that $K_p = K^{(p)}$, one proceeds by induction. Assume
that $K_{p-1}=K^{(p-1)}$ and use the identity $(s_0^{p+1})^*\circ
\delta = ((\partial^{p-1}_0)^* - \delta)\circ (s_0^{p})^*$ to prove
as above that $K_p$ and $K^{(p-1)}$ are $\partial_0^{p-1}$-related.
Arguing similarly with $s_{p}^{p+1}$ proves that $K_p$ and
$K^{(p-1)}$ are $\partial^{p-1}_{p}$-related, which implies that
$K_p=K^{(p)}$.
\end{proof}

\begin{remark}\label{rem:TN}
Along the lines of the proof of Proposition \ref{FN_corresp}, one
can prove that the relations
$$
\left\{
\begin{array}{l}
(s_i^{p})^* \circ\Lie_{K_p} = \Lie_{K_{p-1}} \circ (s_i^p)^*, \, i=0, \dots, p-1\\
\Lie_{K_p} \circ \delta = \delta \circ \Lie_{K_{p-1}}
\end{array}
\right.
$$
are equivalent to each $K_p$ being both $s_i^p$- and
$\partial^{p-1}_j$-related to $K_{p-1}$, for $i=0,\dots, p-1$ and
$j=0, \dots, p$.
When $K$ is a vector field, this indicates that $({K_M},K,
K^{(2)},\ldots)$ may be thought of as a ``vector field'' on the
simplicial manifold $N(\G)$, in the sense that it defines a section
of the natural projection $T(N\G) \Arrow N(\G)$ (where $T(N(\G))$ is
the simplicial manifold obtained by taking the tangent functor on
each component of $N(\G)$). We refer to \cite{Hep} for a related
approach to vector fields on differentiable stacks. It would be
interesting to extend this picture to higher degrees.
\end{remark}


\end{document}